\title{Online adaptive basis enrichment for mixed CEM-GMsFEM}
\author{Eric Chung\thanks{Department of Mathematics, The Chinese University of Hong Kong, Shatin, Hong Kong}, ~Sai-Mang Pun\thanks{Department of Mathematics, The Chinese University of Hong Kong, Shatin, Hong Kong}}
\newcommand\diff{\,\mathrm{d}}
\newcommand\norm[1]{\lVert#1\rVert}
\newcommand\abs[1]{\lvert#1\rvert}
\newcommand\Norm[1]{\bigg\Vert#1\bigg\Vert}
\theoremstyle{definition}
\newtheorem{theorem}{Theorem}[section]
\newtheorem{lemma}[theorem]{Lemma}
\newtheorem{example}[theorem]{Example}
\theoremstyle{remark}
\newtheorem*{remark}{Remark}
\newcommand{\revi}[1]{\textcolor{black}{#1}}
\newcommand{\revii}[1]{\textcolor{black}{#1}}
\newcommand{\reviii}[1]{\textcolor{black}{#1}}
\newcommand\divg[1]{\nabla \cdot #1}
\begin{document}

\maketitle
\begin{abstract}
In this research, an online basis \reviii{enrichment strategy} for the constraint energy minimizing generalized multiscale finite element method in mixed formulation is proposed. The online approach is based on the \reviii{technique} of oversampling. One makes use of the information of residual and the \reviii{data} in the partial differential equation such as the source function. The analysis presented shows that the proposed online enrichment leads to a fast convergence from multiscale approximation to the fine-scale solution. The error reduction can be made sufficiently large by suitably selecting oversampling regions and the number of oversampling layers. \reviii{Also, the convergence rate of the enrichment can be tuned by a user-defined parameter.} Numerical results are provided to illustrate the efficiency of the proposed method.
\end{abstract}
{\bf Key words.} multiscale finite element method, online basis functions, Darcy flow

\section{Introduction} \label{sec:intro}

Many problems arising from physics and engineering involve multiple scales and high contrast, such as \reviii{Darcy's flow model} in heterogeneous porous media. \revi{These problems are} prohibitively costly to solve when traditional fine-scale solvers are directly applied and some type of reduced-order models are considered to avoid the high computational cost. Many model reduction techniques have been well developed in the existing literature. For example, in upscaling methods \cite{chen2003coupled, durlofsky1991numerical, wu2002analysis} which are commonly used, one typically derives another {\it upscaling} media and solves the upscaled problem globally on a coarse grid. This can be done by solving local problems in each coarse element, namely, computing the effective permeability field. 


\reviii{Besides the upscaling approaches mentioned above, multiscale methods  \cite{chung2016adaptive,efendiev2009multiscale,efendiev2000convergence,hou1997multiscale,hughes1995multiscale} have been widely used to approximate the solution of the multiscale problem.} In multiscale methods, the solution of the problem is approximated by local basis functions, which are solutions to a class of local problems on coarse grid. Moreover, because of the necessity of the mass conservation for velocity fields, \revi{many approaches have been proposed} to guarantee this property, such as multiscale finite volume methods  \cite{cortinovis2014iterative,hajibeygi2011hierarchical,jenny2003multi,lie2017feature,lunati2004multi}, mixed multiscale finite element methods \cite{aarnes2004use,aarnes2008mixed,chan2016adaptive,chen2016least,chen2003mixed,chung2015mixed,chung2016mixed}, mortar multiscale methods \cite{arbogast2007multiscale,chung2016enriched,peszynska2005mortar,peszynska2002mortar} and various postprocessing methods \cite{bush2013application,odsaeter2017postprocessing}. 


Preserving the property of mass conservation in multiscale simulations requires special formulation. In a mixed finite element formulation, one may consider a first-order system for pressure and velocity. \reviii{In order to approximate the velocity field, multiscale basis functions are constructed by solving a class of local cell problems with Neumann boundary conditions. One may use a piecewise constant to approximate the pressure field.} 
In this case, the support of pressure basis function consists of a single coarse block, while the support of velocity basis function consists of more than one coarse block sharing a common interface. These multiscale approaches have been well developed in \cite{aarnes2004use,aarnes2008mixed,chen2003mixed} and applied to various situations. 

Recently, a generalized multiscale finite element method (GMsFEM) in mixed formulation has been proposed in \cite{chung2015mixed,efendiev2013mini}. \reviii{The GMsFEM provides} a systematic procedure to construct multiple basis functions for either velocity or pressure in each local patch, which makes these methods different from previous methodology in applications. The computation of velocity basis functions involves a construction of snapshot space and a model reduction via local spectral decomposition to identify appropriate modes to form the multiscale space. The convergence analysis in \cite{chung2015mixed} addresses a spectral convergence $1/\Lambda$, where $\Lambda$ is the smallest eigenvalue whose modes are excluded in the multiscale space. 
\reviii{In \cite{chung2018constraintmixed}}, a variation of GMsFEM based on a constraint energy minimization (CEM) strategy \cite{chung2018constraint} for mixed formulation has been \reviii{developed, providing} a better convergence rate proportional to $H/\Lambda$ with $H$ the size of coarse mesh. This approach makes use of the ideas of oversampling and localization \cite{maalqvist2014localization,owhadi2017multigrid,owhadi2014polyharmonic} to compute multiscale basis functions in oversampled subregions with the satisfaction of an appropriate orthogonality condition. The proposed method provides a mass conservative velocity field and allows one to identify some nonlocal information \reviii{depending on the contrast of the permeability field. This is done by solving a class of well-designed local spectral problems to construct the multiscale space.}


\reviii{The construction of multiscale space can be regarded as {\it offline} computation since it does not take into account the source term and this procedure can be done before solving the actual multiscale approximation. 
In the offline stage, the multiscale solver} can be tuned in various ways to achieve smaller errors; however, the error decay slows down when a certain number of degrees of freedom is reached. This is due to some slow decay after certain eigenvalues. \reviii{Efendiev and co-authors in \cite{chung2015residual,chung2017online} proposed an {\it online} construction within the framework of GMsFEM in order to enhance the accuracy of multiscale approximation. Based on the information of residual related to a computed coarse-grid approximation, one may construct new basis functions with local support in the online stage of simulation.}
\reviii{For the diffusion problem,} the analysis in \cite{chung2015residual,chung2017online} shows that the error decay is proportional to $1-C\Lambda$, where $C$ is a constant independent of scales and contrast and guarantees the positivity of the convergence rate.

In this research, we \reviii{propose an online enrichment strategy} for CEM-GMsFEM in mixed formulation. The strategy is based on the residual information and the technique of oversampling, adopting the ideas in \cite{chung2018fast}, and \reviii{the corresponding online basis functions are supported} in an oversampled region. \reviii{This construction differs from the previous online approach in \cite{chan2016adaptive} since CEM-GMsFEM makes use of the technique of oversampling. In particular, the online basis functions are formulated in the oversampled regions. We show that the proposed online adaptive method provides a better analytical convergence rate compared to $1- \Lambda$ in online mixed GMsFEM \cite{chan2016adaptive}, which additionally requires that the online basis for velocity is divergence free. One can obtain very accurate approximation in one online iteration by choosing an appropriate number of (oversampling) layers. 
In particular, one may achieve a fast convergency to the fine-scale solution with the number of oversampling layers large enough if sufficiently many offline basis functions are included in the construction of initial multiscale space}.

The paper is organized as follows. In Section \ref{sec:prelim}, we present the preliminaries of the model problem. Next, we describe the framework of CEM-GMsFEM in Section \ref{sec:multiscale}. The online adaptive algorithm and the analysis are presented in Section \ref{sec:online}. In Section \ref{sec:numerics}, the numerical results are provided to illustrate the efficiency of the proposed method. Concluding remarks will be drawn in Section \ref{sec:conclusion}. 

\section{Preliminaries}\label{sec:prelim}
\reviii{Consider} a class of high contrast flow problems in the following mixed formulation over the computational domain $\Omega \subset \mathbb{R}^d$ ($d= 2,3$):
\begin{align}
     \kappa^{-1} v + \nabla p & = 0 \quad \text{in } \Omega,\label{eqn:model_1}\\
     \divg{v}  & = f \quad \text{in } \Omega, \label{eqn:model_2} \\
     v \cdot \mathbf{n} & = g \quad \text{on } \partial \Omega, \label{eqn:model_bc}  \\
     \int_\Omega p \diff{x} & = 0,
\end{align}
where $\mathbf{n}$ is the outward unit normal vector field on the boundary $\partial \Omega$. Note that the source function $f\in L^2(\Omega)$ and the function $g \in L^2(\partial \Omega)$ satisfy the following compatibility condition: 
$$ \int_\Omega f \diff{x} = \int_{\partial \Omega} g \diff{S(x)}.$$ 
Assume that the function $\kappa: \Omega \to \mathbb{R}$ is a heterogeneous coefficient with multiple scales and of high contrast. Also, it satisfies $0 < \kappa_{min} \leq \kappa(x) \leq \kappa_{max}$, where $\kappa_{max}$ is large.

Denote $V := H(\text{div};\Omega)$, $Q := L^2(\Omega)$ and $V_0 := \{ v \in V : v \cdot \mathbf{n} = 0 \text{ on } \partial \Omega \}$. To solve the problem, we consider the following variational system: find $u\in V_0$ and $p\in Q$ such that 
\begin{eqnarray}
    a(u,v) - b(v,p) = &0& \quad \forall v\in V_0,  \label{eqn:first}\\
    b(u,q) = &(f,q)& \quad \forall q \in Q, \label{eqn:second}
\end{eqnarray}
where the bilinear forms $a(\cdot,\cdot)$ and $b(\cdot,\cdot)$ are defined as follows:
$$a(v,w) := \int_\Omega \kappa^{-1}v \cdot w \diff{x} \quad \text{and} \quad b(w,q) := \int_\Omega q\ \divg{w} \diff{x}.$$
We remark that \eqref{eqn:first}-\eqref{eqn:second} are solved together with the condition 
$\int_\Omega p \diff{x} = 0$. Note that the following inf-sup condition holds: for all $q\in Q$ with $\int_\Omega q \diff{x} = 0$, there is a constant $c>0$ which is independent to $\kappa_{max}$ such that
\begin{eqnarray}
\norm{q}_{L^2(\Omega)} \leq c \sup_{v\in V_0} \frac{b(v,q)}{\norm{v}_{H(\text{div}; \Omega)}}. \label{eqn:inf-sup}
\end{eqnarray} 
Next, we introduce the notions of fine and coarse grids. Let $\mathcal{T}^H$ be a conforming partition of the computational domain $\Omega$ with mesh size $H>0$. We refer to this partition as the coarse grid. Subordinate to the coarse grid, define the fine grid partition (with mesh size $h \ll H$), denoted by $\mathcal{T}^h$, by refining each coarse element into a connected union of fine grid blocks. We assume the above refinement is performed such that $\mathcal{T}^h$ is a conforming partition of $\Omega$.
Let $N$ be the number of coarse elements and $N_c$ be the number of interior coarse grid nodes of $\mathcal{T}^H$. 
The basis functions used for solving the problem are constructed based on the coarse grid. In the next section, we \reviii{will detail} the constructions of the basis functions for velocity and pressure.

\section{The multiscale method}
\label{sec:multiscale}
In this section, we present the framework of \reviii{CEM-GMsFEM}. The method consists of two general steps. First, we construct a multiscale space for approximating pressure. Next, we use the \reviii{pressure space} to construct another multiscale space for the velocity. \reviii{We remark} that the basis functions for pressure are local. That is, the support of each pressure basis is a coarse element. \reviii{For} each pressure basis function, we construct a corresponding velocity basis function \reviii{supported in} an oversampled region containing the support of the pressure basis function. The oversampled region is obtained by enlarging a coarse element by several coarse grid layers. This localized feature of the velocity basis function is the key to the proposed method.

\subsection{Construction of pressure basis functions}
We present the construction of the pressure basis functions for the mixed formulation. For each coarse element $K_i$, we construct a set of auxiliary multiscale basis functions using a specific spectral problem. For a set $S \subset \Omega$, define $Q(S) := L^2(S)$ and $V_0(S) := \{ v \in H(\text{div};\Omega) : v \cdot n = 0 \text{ on } \partial S \}$. Next, we define the required spectral problem. For each coarse element $K_i$, the spectral problem is to find $(\phi_j^{(i)}, p_j^{(i)}) \in V_0(K_i) \times Q(K_i)$ and the eigenvalue $\lambda_j^{(i)}\in \mathbb{R}$ such that
\begin{eqnarray}
    a(\phi_j^{(i)}, v) - b(v,p_j^{(i)} ) = & 0 & \quad \forall v \in V_0(K_i), \label{eqn:sp_pressure_1} \\
    b(\phi_j^{(i)}, q ) = & \lambda_j^{(i)} s_i(p_j^{(i)}, q)& \quad \forall q \in Q(K_i), \label{eqn:sp_pressure_2}
\end{eqnarray}
where $s_i$ is defined to be
$$ s_i(p,q) := \int_{K_i} \tilde{\kappa} pq \diff{x}, \quad \tilde{\kappa} := \kappa \sum_{\reviii{j=1}}^{N_c} \abs{\nabla \chi_j}^2$$
and $\{\chi_j \}_{j=1}^{N_c}$ is the set of standard multiscale basis functions satisfying the partition of unity property. Remark that one can also use other types of partition of unity functions. Assume that $s_i(p_j^{(i)},p_j^{(i)}) = 1$ and that we arrange the eigenvalues obtained from \eqref{eqn:sp_pressure_1}-\eqref{eqn:sp_pressure_2} in nondecreasing order: $0 = \lambda_1^{(i)} \leq \lambda_2^{(i)} \leq \cdots$. After that, we pick the first $J_i$ eigenfunctions $\{p_j^{(i)}\}$ corresponding to the largest $J_i$ eigenvalues $\lambda_j^{(i)}$ to define the local auxiliary space $Q_{aux}(K_i)$ by $$Q_{aux}(K_i) := \text{span} \{ p_j^{(i)} : 1\leq j \leq J_i \}.$$
The global auxiliary space $Q_{aux}$ is defined as
$Q_{aux} := \bigoplus_{i=1}^N Q_{aux}(K_i)$. Note that the space $Q_{aux}$ will be used as the approximation space for the pressure $p$.

\subsection{Construction of the multiscale basis functions for velocity}
In this section, we present the construction of the velocity basis function. For each pressure basis function in $Q_{aux}$, we construct a corresponding velocity basis function, whose support is an oversampled region containing the support of the pressure basis function. Define the projection operator $\pi: Q \rightarrow Q_{aux}$ by 
$$\pi(q) := \sum_{i=1}^N \sum_{j=1}^{J_i} s_i(q,p_j^{(i)}) p_j^{(i)}, \quad \forall q\in Q.$$ Note that $\pi$ is \reviii{the projection} of $Q$ onto $Q_{aux}$ with respect to the inner product $s(p,q) := \sum_{i=1}^N s_i(p,q)$.
Let $p_j^{(i)} \in Q_{aux}$ be a given pressure basis function supported in $K_i$. Let $K_{i,\ell}$ be an oversampled region obtained by enlarging $\ell$ layers from $K_i$. Namely, 
$$ K_{i,0} := K_i, \quad K_{i,\ell} := \bigcup \{ K \in \mathcal{T}^H : K \cap \overline{K_{i,\ell-1}} \neq  \emptyset \}, \quad \ell = 1,2, \cdots .$$
For simplicity, we may denote this oversampled region as $K_i^+$. 
The multiscale velocity basis function $\psi_{j,ms}^{(i)} \in V_0(K_i^+)$ is constructed by solving the following problem: find $(\psi_{j,ms}^{(i)},q_{j,ms}^{(i)}) \in V_0(K_i^+) \times Q(K_i^+)$ such that 
\begin{eqnarray}
    a(\psi_{j,ms}^{(i)},v) - b(v,q_{j,ms}^{(i)}) = & 0 & \quad \forall v \in V_0(K_i^+), \label{eqn:velocity_1} \\
    s(\pi q_{j,ms}^{(i)},\pi q) + b(\psi_{j,ms}^{(i)},q) = & s(p_{j}^{(i)},q) & \quad \forall q \in Q(K_i^+). \label{eqn:velocity_2}
\end{eqnarray}
The multiscale space for velocity can be defined as $V_{ms} := \{ \psi_{j,ms}^{(i)} : 1 \leq j \leq J_i, 1\leq i \leq N \}$. We remark that the construction for velocity basis function is motivated by the unconstraint energy minimization problem (24) in \cite{chung2018constraint}. One may also define the local basis function $\psi_{j,ms}^{(i)}$ in the region $K_i^+$ by a localization property of the related global basis function $\psi_j^{(i)}$. The global basis function $\psi_j^{(i)}\in V_0$ is defined as the solution to the following problem: find $(\psi_j^{(i)}, q_j^{(i)}) \in V_0\times Q$ such that 
\begin{eqnarray}
    a(\psi_{j}^{(i)},v) - b(v,q_{j}^{(i)}) = & 0 & \quad \forall v \in V_0, \label{eqn:global_velocity_1} \\
    s(\pi q_{j}^{(i)},\pi q) + b(\psi_{j}^{(i)},q) = & s(p_j^{(i)},q) & \quad \forall q \in Q. \label{eqn:global_velocity_2}
\end{eqnarray}
The global multiscale space is defined as $V_{glo} := \text{span}\{ \psi_j^{(i)}: 1 \leq j \leq J_i, 1 \leq i \leq N\}$.
Note that the system \eqref{eqn:global_velocity_1}-\eqref{eqn:global_velocity_2} defines a mapping $G: Q_{aux} \to V_{glo} \times Q$ in the following manner: given $p_{aux} \in Q_{aux}$, the image $G(p_{aux}) = (\psi,r) \in V_{glo} \times Q$ is defined as the solution to the following system: 
\begin{eqnarray}
    a(\psi,v) - b(v,r) = & 0 & \quad \forall v \in V_0, \label{eqn:global_velocity_3} \\
    s(\pi r,\pi q) + b(\psi,q) = & s(p_{aux},q) & \quad \forall q \in Q. \label{eqn:global_velocity_4}
\end{eqnarray}

\begin{remark}
\reviii{In practice, we may use a fixed number of oversampling layers $\ell \in \mathbb{N}^+$ for each $K_i^+$ to form the multiscale basis functions for velocity. }
\end{remark}

\begin{remark}
The mapping $G_1: p_{aux} \mapsto \psi$ is surjective and the operator $G$ is continuous. In particular, for any $p_{aux} \in Q_{aux}$, there exists a constant $C>0$ such that 
\begin{eqnarray}
	\Big( \norm{\psi}_a^2 + \norm{\pi r}_s^2 \Big)^{\frac{1}{2}} \leq C \norm{p_{aux}}_s. \label{eqn:G_con}
\end{eqnarray}
Taking $v = \psi$ in \eqref{eqn:global_velocity_3} and $q = r$ in \eqref{eqn:global_velocity_4} and summing over the equations, one obtains 
\begin{eqnarray*}
	\norm{\psi}_a^2 + \norm{\pi r}_s^2 & = & s(p_{aux}, r) \leq \norm{\tilde \kappa p_{aux}}_{L^2(\Omega)} \norm{r}_{L^2(\Omega)} \\
	& \leq & C_0 B \norm{p_{aux}}_s \norm{\psi}_a \leq  C_0 B \norm{p_{aux}}_s \Big( \norm{\psi}_a^2 + \norm{\pi r}_s^2 \Big)^{\frac{1}{2}},
\end{eqnarray*}
where $C_0$ is the constant in the inf-sup condition \eqref{eqn:inf-sup} and $B = \max_{x\in \Omega} \{\kappa(x) \}$.
\end{remark}

\subsection{The method}
The multiscale solution $(u_{ms}, p_{ms}) \in V_{ms} \times Q_{aux}$ is obtained by solving the following variational system:
\begin{eqnarray}
    a(u_{ms}, v) - b(v,p_{ms}) = & 0 & \quad \forall v \in V_{ms},\label{eqn:var_1} \\
    b(u_{ms},q) = & (f,q) & \quad \forall q \in Q_{aux}. \label{eqn:var_2}
\end{eqnarray}

To analyze the method, we define the norms for the multiscale spaces. 
Given a subset $D \subset \Omega$, for any $v\in V$ and $q\in Q$, define the norms $\norm{\cdot}_{a(D)}$, $\norm{\cdot}_{s(D)}$ and $\norm{\cdot}_{V(D)}$ by
\begin{eqnarray*}
    \norm{v}_{a(D)} & := & \bigg(\int_D \kappa^{-1}\abs{v}^2 \diff{x} \bigg)^{1/2}, \quad  \norm{q}_{s(D)} := \bigg(\int_D \tilde{\kappa} \abs{q}^2 \diff{x} \bigg)^{1/2}, \\
    \norm{v}_{V(D)} & := & \bigg(\int_D \tilde{\kappa}^{-1} \abs{\nabla \cdot v}^2 \diff{x} + \int_D \kappa^{-1} \abs{v}^2 \diff{x} \bigg)^{1/2}.
\end{eqnarray*}
For the case $D = \Omega$, we simply denote the norms as $\norm{\cdot}_a$, $\norm{\cdot}_s$ and $\norm{\cdot}_V$.

\section{Online adaptive enrichment}
\label{sec:online}
In this section, we will introduce an enrichment algorithm requiring the construction of new basis functions based on a given multiscale approximation. These functions constructed in this manner are called online basis functions as they are built in the online stage of computations.

\subsection{Construction of the online basis function}
To begin, we first define a pair of residual functionals. Let $(u_{ms}, p_{ms})$ be the current multiscale solution to the system \eqref{eqn:var_1}-\eqref{eqn:var_2} and $\{ \chi_i \}_{i=1}^{N_c}$ be a set of multiscale partition of unity corresponding to the set of coarse neighborhoods $\{\omega_i \}_{i=1}^{N_c}$, where 
$$\omega_i := \bigcup \{ K_j\in \mathcal{T}^H: x_i \in \overline{K_j} \} $$
and $x_i$ is a coarse node. Define the local residuals $\mathcal{R}_i: V \to \mathbb{R}$ and $r_i : Q \to \mathbb{R}$ as follows: for any $v\in V$ and $q\in Q$
$$ \mathcal{R}_i (v) := a(u_{ms},\chi_i v) - b_i(v,p_{ms}) \quad \text{and} \quad r_i(q) := (f,\chi_i q) - b(u_{ms},\chi_i q),$$
where $b_i(\cdot,\cdot)$ is the restriction of $b(\cdot,\cdot)$ in the neighborhood $\omega_i$. For $\tilde \ell \in \mathbb{N}$, define $\omega_{i,\tilde \ell}$ as follows: 
$$ \omega_{i,0} := \omega_i, \quad \omega_{i,\tilde \ell} := \bigcup \{ K \in \mathcal{T}^H: K \cap \overline{\omega_{i,\tilde \ell - 1}} \neq \emptyset \}, \quad \tilde \ell = 1,2,\cdots .$$ 
We denote $\omega_{i,\tilde \ell} := \omega_i^+$ for short and write $V(\omega_i^+) := H(\text{div}; \omega_i^+)$, $Q(\omega_i^+) = L^2(\omega_i^+)$. The construction of the online basis function is motivated by the local residuals $\mathcal{R}_i$ and $r_i$. One may look for the online basis functions $(\beta_{on}^{(i)},q_{on}^{(i)} ) \in V(\omega_i^+) \times Q(\omega_i^+)$ (both supported in $\omega_i^+$) such that
\begin{eqnarray}
	a(\beta_{on}^{(i)},v) - b(v,q_{on}^{(i)}) = & \mathcal{R}_i(v) & \quad \forall v \in V(\omega_i^+), \label{eqn:new_online_local_v} \\
	s(\pi q_{on}^{(i)}, \pi q) + b(\beta_{on}^{(i)}, q ) = & r_i(q) & \quad \forall q \in Q(\omega_i^+).
\label{eqn:new_online_local_p}
\end{eqnarray}
We remark that the above online basis functions are obtained in the local (oversampled) region $\omega_i^+$ and this is the result of a localization of the corresponding global online basis functions $(\beta_{glo}^{(i)}, q_{glo}^{(i)}) \in V_0 \times Q$ defined by 
\begin{eqnarray}
	a(\beta_{glo}^{(i)},v) - b(v,q_{glo}^{(i)}) = & \mathcal{R}_i(v) & \quad \forall v \in V, \label{eqn:new_online_global_v} \\
	s(\pi q_{glo}^{(i)}, \pi q) + b(\beta_{glo}^{(i)}, q ) = & r_i(q) & \quad \forall q \in Q.
\label{eqn:new_online_global_p}
\end{eqnarray}
See Figure \ref{fig:online_basis} for an illustration of such a local online basis function for velocity.
\begin{figure}[h!]
\centering
\mbox{
\subfigure[First component]{
\includegraphics[width = 2.6in]{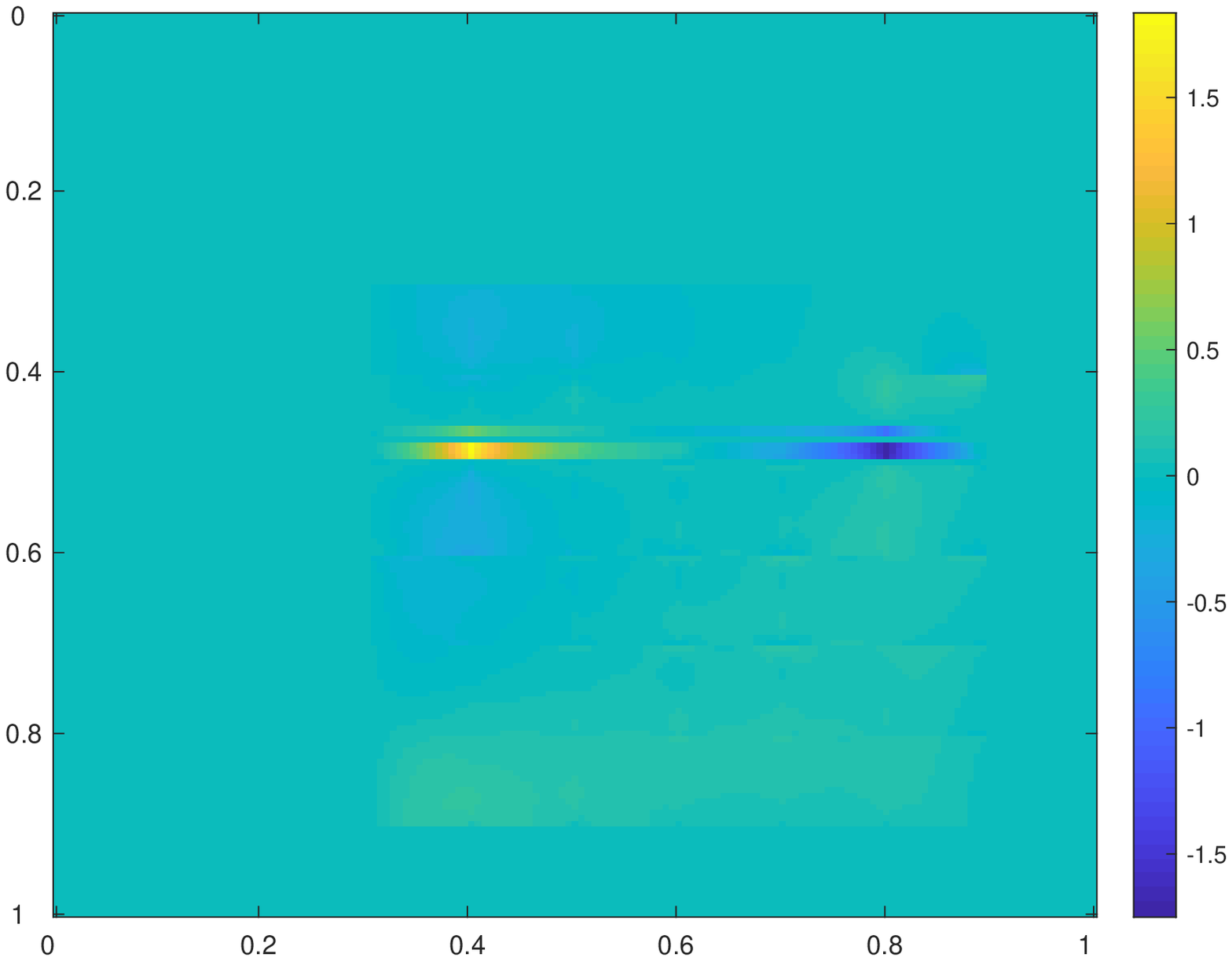}
\label{fig:online_basis_vx}}
\subfigure[Second component]{
\includegraphics[width = 2.6in]{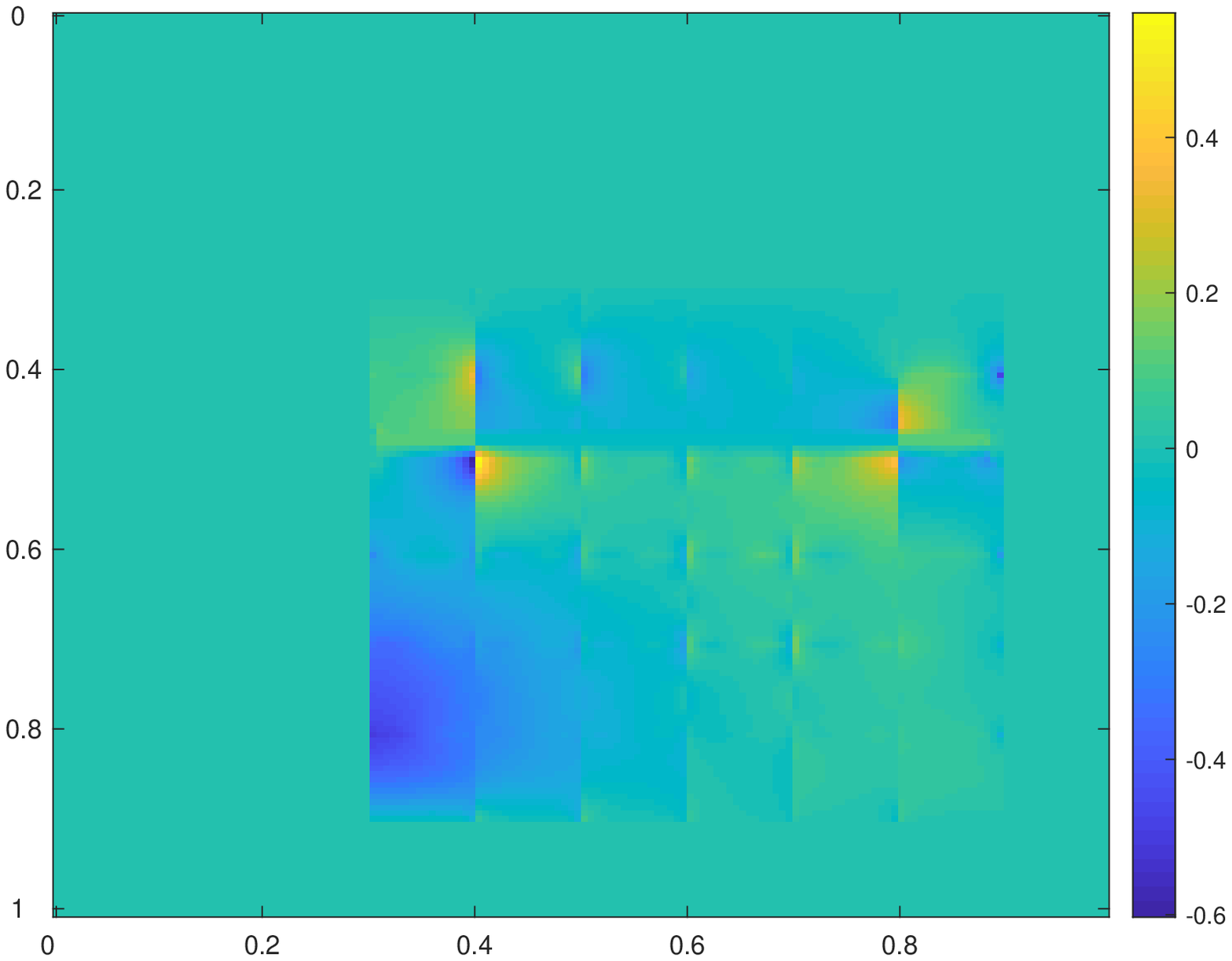}
\label{fig:online_basis_vy}}
}
\label{fig:online_basis}
\caption{An online basis function with $\tilde \ell = 2$.}
\end{figure}

\subsection{Online adaptive algorithm}
In this section, we describe the proposed online adaptive algorithm. 
First, we define the operator norms of the local residuals $\mathcal{R}_i$ and $r_i$ corresponding to $\omega_i \subset \Omega$ as follows: 
$$ \norm{\mathcal{R}_i}_{a^*} := \sup_{v \in V} \frac{\abs{\mathcal{R}_i (v)}}{\norm{v}_{a(\omega_i)}} \quad \text{and} \quad \norm{r_i}_{s^*} := \sup_{q \in Q} \frac{\abs{r_i(q)}}{\norm{q}_{s(\omega_i)}}.$$
In the following, we denote 
$$\mathcal{R}_i^m(v) := a(u_{ms}^m,\chi_i v) - b_i(v,p_{ms}^m) \quad \text{and} \quad r_i^m(q) := (f, \chi_i q) - b(u_{ms}^m, \chi_i q),$$
where $(u_{ms}^m, p_{ms}^m)$ is the multiscale solution at iteration level $m \in \mathbb{N}$. 
We detail the algorithm as follows. 
The index $m \in \mathbb{N}$ represents the level of online enrichment. Set $m = 0$ and initially define $V_{ms}^m := V_{ms}$. Choose a tolerance $\texttt{tol} \in \mathbb{R}_+$ and a parameter $\theta$ such that $0< \theta \leq 1$. For each $m \in \mathbb{N}$, assume that $V_{ms}^m$ is given. Go to Step 1 below.
\begin{itemize}
    \item [{\bf Step 1.}] Solve \eqref{eqn:var_1}-\eqref{eqn:var_2} over the multiscale spaces $V_{ms}^m \times Q_{aux}$ to obtain the current approximation $(u_{ms}^m, p_{ms}^m) \in V_{ms}^m \times Q_{aux}$.
    
    \item [{\bf Step 2.}] For each $i= 1,\cdots,N_c$, compute the norms of the local residuals to obtain $\eta_{i,m}^2 :=  \norm{\mathcal{R}_i^m}_{a^*}^2 + \norm{r_i^m}_{Q^*}^2$. Rearrange the indices of $\eta_{i,m}$ such that $\eta_{1,m} \geq \eta_{2,m} \geq \cdots \geq \eta_{N_c,m}$. Choose the smallest integer $k\in \mathbb{N}$ such that 
    $$ \sum_{i=1}^k \eta_{i,m}^2 \geq \theta \sum_{i=1}^{N_c} \eta_{i,m}^2.$$
    
    \item [{\bf Step 3.}] For each $i = 1,\cdots,k$, solve \eqref{eqn:new_online_local_v}-\eqref{eqn:new_online_local_p} over $\omega_i^+$ to obtain the online basis functions $(\beta_{on}^{(i)},q_{on}^{(i)})$. Enrich the mutlsicale space by letting
    $$ V_{ms}^{m+1} := V_{ms}^m \oplus \text{span}\{\beta_{on}^{(i)} : i=1,\cdots,k \}.$$
    
    \item [{\bf Step 4.}] 
    If $\sum_{i=1}^{N_c} \eta_{i,m}^2 \leq \texttt{tol}$ or there is a certain number of basis functions in $V_{ms}^{m+1}$, then {\bf Stop}. Otherwise, go back to {\bf Step 1} and set $m \leftarrow m+1$.    
\end{itemize}

\begin{remark}
\revii{Besides solving the multiscale approximations in Steps 1 and 4 above, one requires to compute the local residuals to obtain the error indicators $\eta_{i,m}$ in each iteration of the online enrichment. After that, one may adaptively select the local patches $\omega_i$, where the indicators are large, and construct the corresponding online basis functions in the enlarged regions $\omega_i^+$. Figure \ref{fig:flow_chart} depicts the procedure of the proposed online adaptive algorithm. 
}
\end{remark}

\begin{remark}
\reviii{Similar to the offline construction for velocity bases, we consider a uniform number of oversampling layers $\tilde \ell$ during the whole online procedure in order to simplify the implementation of the algorithm. }
\end{remark}

\tikzstyle{decision} = [diamond, draw, fill=blue!20, 
    text width=5.5em, text centered, node distance=3cm, inner sep=0pt]
\tikzstyle{block} = [rectangle, draw, fill=blue!20, 
    text width=7.5em, text centered, rounded corners, minimum height=4.2em]
\tikzstyle{line} = [draw, -latex']
\tikzstyle{cloud} = [draw, ellipse,fill=red!20, node distance=3cm,
    minimum height=2em]
\tikzstyle{arrow} = [->,>=stealth]    

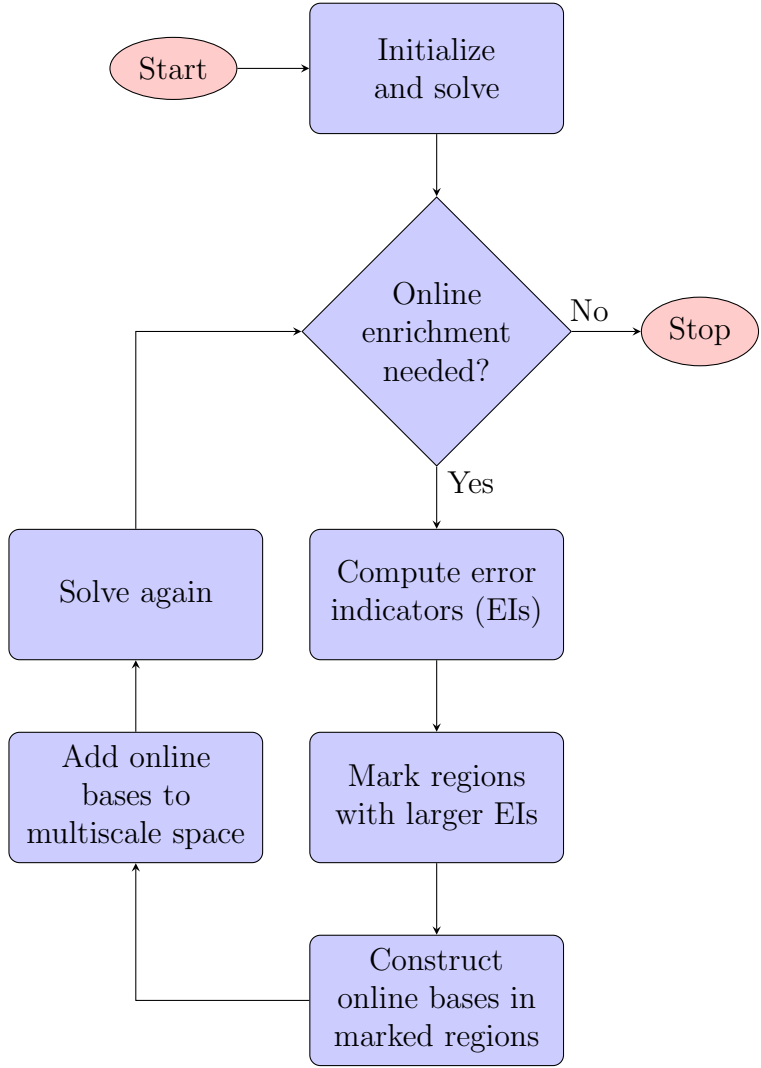
\begin{figure}[h!]
\centering
\begin{tikzpicture}[node distance = 2cm, auto]
    \node [cloud] (start) {Start};
    \node [block, right of=start, node distance = 3.5cm] (fs) {Initialize and solve};
    \node [decision, below of=fs, node distance = 3.5cm] (enrich) {Online enrichment needed?};
    \node [block, below of=enrich, node distance=3.5cm] (compute) {Compute error indicators (EIs)};
    \node [block, below of=compute, node distance=2.7cm] (sort) {Mark regions with larger EIs};
    \node [block, below of=sort, node distance=2.7cm] (construct) {Construct online bases in marked regions};
    \node [block, left of=sort, node distance=4cm] (add) {Add online bases to multiscale space};
    \node [block, left of=compute, node distance=4cm] (solve) {Solve again};
    \node [cloud, right of=enrich, node distance=3.5cm] (stop) {Stop};
    \draw [arrow] (start) -- (fs);
    \draw [arrow] (fs) -- (enrich);
    \draw [arrow] (enrich) -- node [near start] {Yes} (compute);
    \draw [arrow] (compute) -- (sort); 
    \draw [arrow] (sort) -- (construct);
    \draw [arrow] (construct) -| (add);
    \draw [arrow] (add) -- (solve);
    \draw [arrow] (solve) |- (enrich);
    \draw [arrow] (enrich) -- node [near start] {No} (stop);
\end{tikzpicture}
\caption{A graphical description of the proposed online enrichment strategy.} 
\label{fig:flow_chart}
\end{figure}

\subsection{Analysis}
In this section, we provide the convergence analysis of the proposed method. We denote $a \lesssim b$ if there is a generic constant $C>0$ such that $a \leq Cb$. Next, we recall some useful theoretical results from \cite{chung2018constraintmixed}.
\begin{lemma}[Lemma 1 in \cite{chung2018constraintmixed}]
\label{lem:1_mixed}
For each $p_{aux} \in Q_{aux}$ with $s(p_{aux},1) = 0 $, there is a unique $u \in V_{glo}$ such that $(u,p) \in V_0 \times Q$ (with $\int_\Omega p \diff{x} = 0$) is the solution of the following system: 
\begin{eqnarray*}
	a(u,v) - b(v,p) = & 0 & \quad \forall v \in V_0, \\
	b(u,q) = & s(p_{aux},q)& \quad \forall q \in Q.
\end{eqnarray*}
\end{lemma}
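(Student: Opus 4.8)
The plan is to recognize that this is a standard mixed-formulation well-posedness statement, and to prove it by verifying the hypotheses of the Babu\v{s}ka--Brezzi theory for the saddle-point system, together with a compatibility (mean-value) condition coming from $s(p_{aux},1)=0$. First I would set up the variational problem: we seek $(u,p)\in V_0\times Q$ with $\int_\Omega p\diff x = 0$ solving
\begin{eqnarray*}
	a(u,v) - b(v,p) = & 0 & \quad \forall v \in V_0, \\
	b(u,q) = & s(p_{aux},q)& \quad \forall q \in Q.
\end{eqnarray*}
The right-hand side functional $q\mapsto s(p_{aux},q)$ is bounded on $L^2(\Omega)$ since $\tilde\kappa$ is bounded, and the compatibility condition needed for solvability of the second equation against the constant test function $q\equiv 1$ is precisely $s(p_{aux},1)=0$, which is assumed. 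I would then invoke the three classical ingredients: (i) boundedness of $a$ and $b$ on $V_0\times V_0$ and $V_0\times Q$ respectively, which is immediate from $\kappa_{\min}\le\kappa\le\kappa_{\max}$ and Cauchy--Schwarz; (ii) coercivity of $a$ on the kernel $\mathcal{Z} := \{v\in V_0 : b(v,q)=0\ \forall q\in Q\} = \{v\in V_0 : \divg v = 0\}$ — on this kernel $\norm{v}_{H(\mathrm{div};\Omega)}^2 = \norm{v}_{L^2(\Omega)}^2$ and $a(v,v)\ge \kappa_{\max}^{-1}\norm{v}_{L^2(\Omega)}^2$, giving ellipticity; (iii) the inf-sup condition \eqref{eqn:inf-sup}, valid for all $q\in Q$ with $\int_\Omega q\diff x = 0$, which supplies the needed surjectivity of $b(\cdot,\cdot)$ onto the mean-zero subspace of $Q$.

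With these in hand, the Brezzi splitting theorem yields existence and uniqueness of $(u,p)$ in $V_0\times \big(Q\cap\{\int_\Omega p\diff x=0\}\big)$: the second equation determines $u$ modulo $\mathcal{Z}$, the inf-sup condition lets us pick a particular $u_0$ with $b(u_0,q)=s(p_{aux},q)$ for all $q$ (here the compatibility $s(p_{aux},1)=0$ is what makes the target functional lie in the range of $b$), and then the first equation restricted to $\mathcal{Z}$ plus coercivity of $a$ on $\mathcal{Z}$ determines the correction in $\mathcal{Z}$ uniquely; finally $p$ is recovered uniquely in the mean-zero class from the first equation and the inf-sup condition. The remaining point specific to this lemma is the assertion $u\in V_{glo}$. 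I would establish this by comparing with the defining system \eqref{eqn:global_velocity_3}--\eqref{eqn:global_velocity_4} of the map $G$: writing $(\psi,r) = G(p_{aux})$, the pair $(\psi,r)$ satisfies $a(\psi,v)-b(v,r)=0$ and $s(\pi r,\pi q)+b(\psi,q)=s(p_{aux},q)$; testing the second identity with $q$ ranging over $Q_{aux}$ and using $\pi q = q$ there, then comparing with the present system, one shows $\pi r = 0$ and hence $b(\psi,q)=s(p_{aux},q)$ for all $q\in Q$, so $(\psi, \tilde p)$ solves the same well-posed saddle-point problem as $(u,p)$ for a suitable pressure $\tilde p$; by uniqueness $u=\psi\in V_{glo}$.

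The main obstacle, and the step deserving the most care, is the identification $u\in V_{glo}$ rather than merely $u\in V_0$ — i.e.\ showing that the auxiliary term $s(\pi r,\pi q)$ in the definition of $G$ drops out precisely when $s(p_{aux},1)=0$, so that the two saddle-point systems have the same solution. One has to argue carefully that $\pi r$ vanishes: take $q = \pi r \in Q_{aux}$ in \eqref{eqn:global_velocity_4}, use that $b(\psi,\pi r)$ relates to $b(u,\cdot)=s(p_{aux},\cdot)$ on $Q_{aux}$, and that $s(p_{aux},\pi r) = s(\pi p_{aux}, r) = s(p_{aux},r) = \cdots$, combined with $s(p_{aux},1)=0$ to handle the constant mode; this forces $\norm{\pi r}_s^2 = 0$. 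All other pieces — boundedness, kernel-coercivity, and the supplied inf-sup condition — are routine applications of the standard mixed theory and the coefficient bounds on $\kappa$.
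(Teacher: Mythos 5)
First, a point of comparison: the paper itself does not prove this statement; it imports it verbatim as Lemma~1 of \cite{chung2018constraintmixed}, so your proof can only be judged on its own merits. The routine half is fine: your Babu\v{s}ka--Brezzi verification (boundedness, coercivity of $a$ on the divergence-free kernel, the inf-sup condition \eqref{eqn:inf-sup}, and the compatibility $b(u,1)=0=s(p_{aux},1)$) correctly yields a unique $(u,p)\in V_0\times Q$ with $\int_\Omega p\diff{x}=0$, and uniqueness of the $V_{glo}$ representative then follows from uniqueness of the saddle-point solution. The gap is exactly at the step you yourself flag as delicate, and your resolution of it fails. You claim that for $(\psi,r)=G(p_{aux})$ one has $\pi r=0$, so that $\psi$ solves the clean system with the \emph{same} datum $p_{aux}$. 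This is false in general, and the argument offered is circular: to conclude $\norm{\pi r}_s^2=s(p_{aux},\pi r)-b(\psi,\pi r)=0$ you need $b(\psi,\cdot)$ to agree with $s(p_{aux},\cdot)$ on $Q_{aux}$, which is precisely the identity $u=\psi$ you are trying to establish. What \eqref{eqn:global_velocity_3}--\eqref{eqn:global_velocity_4} actually give is $\divg{\psi}=\tilde\kappa\,(p_{aux}-\pi r)$, i.e.\ $\psi$ solves the clean saddle-point problem with the \emph{modified} datum $p_{aux}-\pi r$. The relaxation term $s(\pi r,\pi q)$ is not an artifact that drops out; it is the essential feature of the CEM construction responsible for the exponential decay in Lemma~\ref{lem:local}, and $\pi q_j^{(i)}\neq 0$ generically. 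This is also visible in the main theorem's proof, where the coefficients $c_j^{(i)}$ of $\eta\in V_{glo}$ determine an auxiliary function $\sum_{i,j}c_j^{(i)}p_j^{(i)}$ that is \emph{not} the datum $-\pi\sum_i q_{glo}^{(i)}$ of the clean system that $\eta$ solves.

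What is actually needed to prove $u\in V_{glo}$ is a surjectivity statement. Writing a candidate $u=\sum_{i,j}c_j^{(i)}\psi_j^{(i)}$ with multiplier $\xi=\sum_{i,j}c_j^{(i)}q_j^{(i)}$ and $p'=\sum_{i,j}c_j^{(i)}p_j^{(i)}$, linearity of $G$ gives $b(u,q)=s(p'-\pi\xi,\,q)$ for all $q\in Q$, so the lemma reduces to showing that the linear map $Q_{aux}\ni p'\mapsto p'-\pi\xi(p')\in Q_{aux}$ reaches every prescribed mean-zero target $p_{aux}$. This is a finite-dimensional injectivity-plus-dimension-count argument (a nontrivial kernel element would force, via the energy identity $\norm{u}_a^2+\norm{\pi\xi}_s^2=s(p',\xi)$ and $\divg{u}=0$, that $u=0$ and $\xi$ is constant, which must then be excluded using the mean-zero normalization). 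None of this appears in your write-up, and it is the only nonroutine content of the lemma; as it stands the inclusion $u\in V_{glo}$ is not established.
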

The following lemma motivates the local multiscale basis functions defined in \eqref{eqn:velocity_1}-\eqref{eqn:velocity_2}, saying that the global basis functions defined in \eqref{eqn:global_velocity_1}-\eqref{eqn:global_velocity_2} have an exponential decay outside an oversampled region. In the following, we denote $E$ as the constant of exponential decay 
$$E = C(1+\Lambda^{-1})\Big(1+C^{-1}(1+\Lambda^{-1})^{-\frac{1}{2}})\Big)^{1-\ell}, \quad \text{where }\Lambda := \min_{1\leq i \leq N} \lambda_{J_i +1}^{(i)}.$$ 

\begin{lemma}[Lemma 7 in \cite{chung2018constraintmixed}] 
\label{lem:local}
Let $(\psi_j^{(i)}, q_j^{(i)})$ be the solution of \eqref{eqn:global_velocity_1}-\eqref{eqn:global_velocity_2} and $(\psi_{j,ms}^{(i)}, q_{j,ms}^{(i)})$ be the solution \eqref{eqn:velocity_1}-\eqref{eqn:velocity_2}. For $K_i^+ = K_{i,\ell}$ with $\ell \geq 2$, we have 
$$ \norm{\psi_j^{(i)} - \psi_{j,ms}^{(i)}}_V^2 + \norm{q_j^{(i)} - q_{j,ms}^{(i)}}_s^2 \leq E\norm{p_j^{(i)}}_s^2.$$
\end{lemma}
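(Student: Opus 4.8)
The statement combines two ingredients: an \emph{exponential decay} estimate for the global basis pair $(\psi_j^{(i)},q_j^{(i)})$ away from its source block $K_i$, and a \emph{quasi-optimality} (C\'ea-type) bound showing that the localized problem \eqref{eqn:velocity_1}--\eqref{eqn:velocity_2} reproduces the global function up to the energy of any admissible competitor supported away from $K_i$. The plan is to prove quasi-optimality first, reducing the localization error to a tail energy of $\psi_j^{(i)}$, and then to control that tail energy by iterating a one-layer decay recursion whose contraction factor is governed by $\Lambda^{-1}$ via the spectral problem \eqref{eqn:sp_pressure_1}--\eqref{eqn:sp_pressure_2}.

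\emph{Step 1 (Galerkin orthogonality and a stable correction).} Subtracting \eqref{eqn:velocity_1}--\eqref{eqn:velocity_2} from \eqref{eqn:global_velocity_1}--\eqref{eqn:global_velocity_2}, the error pair $e_\psi := \psi_j^{(i)}-\psi_{j,ms}^{(i)}$, $e_q := q_j^{(i)}-q_{j,ms}^{(i)}$ satisfies $a(e_\psi,v)-b(v,e_q)=0$ for all $v\in V_0(K_i^+)$ and $s(\pi e_q,\pi q)+b(e_\psi,q)=0$ for all $q\in Q(K_i^+)$; that is, $(e_\psi,e_q)$ is orthogonal to the local spaces in the combined bilinear form. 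I would then introduce, via the coarse partition of unity, cutoff functions $\chi^{i,\ell}$ with $\chi^{i,\ell}\equiv 1$ on $K_{i,\ell-1}$, $\chi^{i,\ell}\equiv 0$ outside $K_{i,\ell}$, and $\abs{\nabla\chi^{i,\ell}}\lesssim H^{-1}$, so that $(1-\chi^{i,\ell})\psi_j^{(i)}$ is supported in $\Omega\setminus K_{i,\ell-1}$. Since multiplying an $H(\mathrm{div})$ field by a cutoff changes its divergence, I would repair admissibility by solving a local Darcy-type problem on the annulus $K_{i,\ell}\setminus K_{i,\ell-1}$ (using the localized inf-sup condition \eqref{eqn:inf-sup}) to produce a correction whose $V$-norm is bounded by $\norm{\psi_j^{(i)}}_V + \norm{q_j^{(i)}}_s$ on that annulus. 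Plugging the resulting competitor into the orthogonality relation and using standard saddle-point stability on $K_i^+$ yields $\norm{e_\psi}_V^2 + \norm{e_q}_s^2 \lesssim \norm{\psi_j^{(i)}}_{V(\Omega\setminus K_{i,\ell-1})}^2 + \norm{q_j^{(i)}}_{s(\Omega\setminus K_{i,\ell-1})}^2$.

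\emph{Step 2 (exponential decay of the global basis).} For $\ell' < \ell$ I would test \eqref{eqn:global_velocity_1}--\eqref{eqn:global_velocity_2} with functions built from $\chi^{i,\ell'}$ and $\psi_j^{(i)}$, exploiting that $p_j^{(i)}$ is supported in $K_i$, so $s(p_j^{(i)},q)=0$ whenever $q$ is supported in $\Omega\setminus K_i$. The essential inequality is the spectral bound coming from \eqref{eqn:sp_pressure_1}--\eqref{eqn:sp_pressure_2}: on each coarse block $K_m$, $\norm{q-\pi q}_{s(K_m)}^2 \le (\lambda_{J_m+1}^{(m)})^{-1}$ times a divergence term, so that $\Lambda^{-1}$ controls the discrepancy between $q_j^{(i)}$ and $\pi q_j^{(i)}$ outside the support. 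Combining this with the Cauchy--Schwarz estimates on the cutoff-gradient terms gives the recursion
$$
\norm{\psi_j^{(i)}}_{V(\Omega\setminus K_{i,\ell'})}^2 + \norm{q_j^{(i)}}_{s(\Omega\setminus K_{i,\ell'})}^2
\;\le\; \Big(1+C^{-1}(1+\Lambda^{-1})^{-1/2}\Big)^{-1}\Big(\norm{\psi_j^{(i)}}_{V(\Omega\setminus K_{i,\ell'-1})}^2 + \norm{q_j^{(i)}}_{s(\Omega\setminus K_{i,\ell'-1})}^2\Big).
$$
Iterating from $\ell'=\ell-1$ down to $\ell'=1$ produces the factor $\big(1+C^{-1}(1+\Lambda^{-1})^{-1/2}\big)^{1-\ell}$, while the base case uses the continuity bound \eqref{eqn:G_con} (and Lemma~\ref{lem:1_mixed}) to get $\norm{\psi_j^{(i)}}_V^2 + \norm{q_j^{(i)}}_s^2 \lesssim (1+\Lambda^{-1})\norm{p_j^{(i)}}_s^2$. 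Together with Step~1 this is exactly the claimed bound with $E = C(1+\Lambda^{-1})\big(1+C^{-1}(1+\Lambda^{-1})^{-1/2}\big)^{1-\ell}$.

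\emph{Main obstacle.} The delicate point is the mixed/saddle-point bookkeeping surrounding the cutoffs: (a) every application of a cutoff to a velocity field destroys the divergence constraint, so one must re-establish admissibility through a localized inf-sup correction and verify that the correction's norm is dominated by the \emph{annulus} energy only (otherwise the recursion does not close); and (b) the projection $\pi$ is global whereas the cutoffs are local, so one must argue that $s(\pi q,\pi q)$ localizes — e.g., that $\pi(\chi q)$ and $\chi\,\pi q$ differ only by terms already accounted for, or reorganize the test so only the single-block forms $s_i$ appear. Making both of these quantitative, with constants independent of the contrast $\kappa_{max}$, is what turns the heuristic recursion into the stated estimate; the remaining manipulations are routine Cauchy--Schwarz and trace/Poincar\'e bounds on coarse blocks.
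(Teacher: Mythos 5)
The paper itself offers no proof of this lemma --- it is imported verbatim as Lemma 7 of \cite{chung2018constraintmixed} --- so the only meaningful comparison is with the proof in that reference, and your outline (Galerkin orthogonality with a cutoff-built competitor plus a divergence repair on the annulus, followed by a layer-by-layer contraction governed by $\Lambda$ and iterated $\ell-1$ times to produce the factor $\bigl(1+C^{-1}(1+\Lambda^{-1})^{-1/2}\bigr)^{1-\ell}$) is precisely that argument. The obstacles you single out --- restoring the divergence constraint after truncation, recovering the full $q$-error from $\pi e_q$ via the inf-sup condition, and localizing $\pi$ --- are indeed the genuine technical points there, and your sketch addresses them in the standard way; I see no gap.
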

Next, the following lemma is needed in the analysis of the main result. The proof of this lemma makes use of the cutoff function used in \cite{chung2018constraint} and one may see \cite{chung2018fast} for more details about this lemma.
\begin{lemma}[cf. Lemma 3 in \cite{chung2018fast}]
\label{lem:add_norm_sp}
Assume the same conditions in Lemma \ref{lem:local} hold. For any $\{ d_j^{(i)} \} \subset \mathbb{R}$ and $\widetilde C = C(1+\Lambda^{-1})(\ell+1)^d$, we have
\begin{eqnarray*}
\Norm{\sum_{i=1}^N \sum_{j=1}^{J_i} d_j^{(i)} (\psi_j^{(i)} - \psi_{j,ms}^{(i)})}_V^2 &+& \Norm{\sum_{i=1}^N \sum_{j=1}^{J_i} d_j^{(i)} \pi(q_j^{(i)} - q_{j,ms}^{(i)})}_s^2  \\
&\leq& \widetilde C \sum_{i=1}^N \bigg( \Norm{ \sum_{j=1}^{J_i} d_j^{(i)} (\psi_j^{(i)} - \psi_{j,ms}^{(i)})}_V^2  
+ \Norm{\sum_{j=1}^{J_i} d_j^{(i)} \pi(q_j^{(i)} - q_{j,ms}^{(i)} )}_s^2 \bigg).
\end{eqnarray*}
\end{lemma}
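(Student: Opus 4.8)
The plan is to prove the estimate by localising each of the differences $\psi_j^{(i)} - \psi_{j,ms}^{(i)}$ with a cutoff function and then applying a finite-overlap Cauchy--Schwarz inequality, in the spirit of the cutoff technique of \cite{chung2018constraint} and its use in \cite{chung2018fast}. Write $\Psi_i := \sum_{j=1}^{J_i} d_j^{(i)}\psi_j^{(i)}$, $\Psi_i^{ms} := \sum_{j=1}^{J_i} d_j^{(i)}\psi_{j,ms}^{(i)}$, $\Phi_i := \pi\big(\sum_{j=1}^{J_i} d_j^{(i)} q_j^{(i)}\big)$ and $\Phi_i^{ms} := \pi\big(\sum_{j=1}^{J_i} d_j^{(i)} q_{j,ms}^{(i)}\big)$. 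By linearity of \eqref{eqn:global_velocity_1}--\eqref{eqn:global_velocity_2} and \eqref{eqn:velocity_1}--\eqref{eqn:velocity_2}, the pair $(\Psi_i - \Psi_i^{ms},\, \Phi_i-\Phi_i^{ms})$ solves the same residual-type problem as a single basis pair, so Lemma \ref{lem:local} --- together with the iterated exponential decay underlying its proof in \cite{chung2018constraintmixed} --- applies to it; in particular $\Psi_i^{ms}$ is supported in $K_i^+ = K_{i,\ell}$, $\Phi_i^{ms}$ in $K_{i,\ell+1}$, and $\Psi_i-\Psi_i^{ms}$, $\Phi_i-\Phi_i^{ms}$ decay exponentially on the complements of the oversampled regions.

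First I would fix, for each $i$, a coarse-mesh cutoff $\chi_i^+$ with $0\le \chi_i^+\le 1$, $\chi_i^+\equiv 1$ on $K_{i,\ell+1}$, $\chi_i^+\equiv 0$ outside $K_{i,\ell+2}$ and $\abs{\nabla\chi_i^+}\lesssim H^{-1}$, and split
\begin{align*}
\Psi_i-\Psi_i^{ms} &= \chi_i^+(\Psi_i-\Psi_i^{ms}) + (1-\chi_i^+)\Psi_i, \\
\Phi_i-\Phi_i^{ms} &= \chi_i^+(\Phi_i-\Phi_i^{ms}) + (1-\chi_i^+)\Phi_i,
\end{align*}
noting that the first terms are supported in $K_{i,\ell+2}$ while the second ones vanish on $K_{i,\ell+1}$. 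For the local pieces I would use the standard cutoff estimates: the gradient-of-cutoff contribution to $\norm{\chi_i^+(\Psi_i-\Psi_i^{ms})}_V$ is absorbed via the defining form of the weight $\tilde\kappa$, and for the $s$-norm multiplication by $\chi_i^+$ is a contraction while $\pi$ is an $s$-orthogonal projection, so no derivative terms arise; this gives $\norm{\chi_i^+(\Psi_i-\Psi_i^{ms})}_V^2 + \norm{\chi_i^+(\Phi_i-\Phi_i^{ms})}_s^2 \lesssim (1+\Lambda^{-1})\big(\norm{\Psi_i-\Psi_i^{ms}}_V^2 + \norm{\Phi_i-\Phi_i^{ms}}_s^2\big)$. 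Since the family $\{K_{i,\ell+2}\}_i$ has overlap number $\lesssim(\ell+1)^d$, a pointwise Cauchy--Schwarz argument then bounds $\Norm{\sum_i\chi_i^+(\Psi_i-\Psi_i^{ms})}_V^2 + \Norm{\sum_i\chi_i^+(\Phi_i-\Phi_i^{ms})}_s^2$ by $C(1+\Lambda^{-1})(\ell+1)^d\sum_i\big(\norm{\Psi_i-\Psi_i^{ms}}_V^2 + \norm{\Phi_i-\Phi_i^{ms}}_s^2\big)$, which is already of the desired form.

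The remaining --- and, I expect, the hardest --- step is to estimate $\sum_i(1-\chi_i^+)\Psi_i$ and $\sum_i(1-\chi_i^+)\Phi_i$, because these summands are not compactly supported, so the finite-overlap argument does not apply to them verbatim, and a crude triangle inequality would only produce the useless factor $N$. Here I would use that $(1-\chi_i^+)\Psi_i$, and the gradient-of-cutoff term appearing in its $V$-norm, are supported in $\Omega\setminus K_{i,\ell+1}$, where $\Psi_i = \Psi_i-\Psi_i^{ms}$, together with the iterated decay estimate of \cite{chung2018constraintmixed}, $\norm{\Psi_i-\Psi_i^{ms}}_{V(\Omega\setminus K_{i,\ell+1+m})}^2 \lesssim (1+\Lambda^{-1})\gamma^{m}\norm{\Psi_i-\Psi_i^{ms}}_V^2$ for some $\gamma=\gamma(\Lambda)\in(0,1)$ and all $m\ge 0$ (and its $s$-norm analogue). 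Expanding $\Norm{\sum_i(1-\chi_i^+)\Psi_i}_V^2$ into diagonal and cross terms and grouping the index pairs $(i,i')$ by the number $m$ of coarse layers separating $K_i$ from $K_{i'}$, the contribution of the $m$-th group should be at most $C(1+\Lambda^{-1})(\ell+m)^{d-1}\gamma^{m/2}\sum_i\norm{\Psi_i-\Psi_i^{ms}}_V^2$; summing the resulting convergent series leaves a bound $\lesssim (1+\Lambda^{-1})(\ell+1)^d\sum_i\norm{\Psi_i-\Psi_i^{ms}}_V^2$, with $(\ell+1)^d$ the dominant factor. Treating the pressure component the same way and combining with the local estimate of the previous step then gives the claim with $\widetilde C = C(1+\Lambda^{-1})(\ell+1)^d$.
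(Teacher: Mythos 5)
The paper does not actually prove this lemma: it is quoted from Lemma 3 of \cite{chung2018fast}, and the only guidance given here is that the argument ``makes use of the cutoff function used in \cite{chung2018constraint}.'' Your strategy --- a cutoff decomposition into a compactly supported part plus an exponentially small tail, a finite-overlap Cauchy--Schwarz inequality for the local parts (which is where the factor $(\ell+1)^d$ comes from), and the weight $\tilde\kappa$ absorbing $\abs{\nabla\chi_i^+}^2$ in the divergence term of the $V$-norm --- is exactly that technique, so in outline you are on the intended route.

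The one place where you assert more than the quoted results deliver is the tail estimate. The statement of Lemma \ref{lem:local} bounds $\norm{\Psi_i-\Psi_i^{ms}}_V^2$ from above by $E\norm{\sum_j d_j^{(i)}p_j^{(i)}}_s^2$, i.e.\ relative to the auxiliary datum, and this inequality goes the wrong way for converting a tail bound expressed in terms of $\norm{\cdot}_s^2$ of the data into one expressed in terms of $\norm{\Psi_i-\Psi_i^{ms}}_V^2$, which is what the right-hand side of the lemma requires. To obtain your inequality $\norm{\Psi_i-\Psi_i^{ms}}_{V(\Omega\setminus K_{i,\ell+1+m})}^2\lesssim\gamma^m\norm{\Psi_i-\Psi_i^{ms}}_V^2$ you must, as you only hint, go inside the proof of the decay lemma and use the annulus-by-annulus recursion $\norm{\Psi_i}_{V(\Omega\setminus K_{i,n+1})}^2\le\gamma\norm{\Psi_i}_{V(\Omega\setminus K_{i,n})}^2$, combined with the observation that $\Psi_i=\Psi_i-\Psi_i^{ms}$ outside $K_{i,\ell}$, so that the tail of $\Psi_i$ beyond $K_{i,\ell+1}$ is already dominated by $\norm{\Psi_i-\Psi_i^{ms}}_V$; this step is the hinge of the whole tail argument and should be made explicit. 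Likewise, in the cross-term sum you need the (unstated) geometric fact that on the common support of $(1-\chi_i^+)\Psi_i$ and $(1-\chi_{i'}^+)\Psi_{i'}$ at least one of the two functions is evaluated at least $m/2$ layers away from its own center, which is what produces the decaying factor before summing over the $O(m^{d-1})$ indices at separation $m$. With these two points filled in, the argument closes; a small bookkeeping remark is that $q_{j,ms}^{(i)}\in Q(K_{i,\ell})$, so $\Phi_i^{ms}$ is supported in $K_{i,\ell}$ rather than $K_{i,\ell+1}$, which only makes your support claims easier.
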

Note that the basis functions constructed during the online stage are defined the same as the multiscale one. The same localization result holds for the online basis functions and the proof of the following lemma is the same as that for Lemmas \ref{lem:local} and \ref{lem:add_norm_sp}, and is therefore omitted.
\begin{lemma}
\label{lem:online_local}
	Assume that $\omega_i^+$ is obtained from $\omega_i$ by enlarging $\tilde \ell$ coarse grid layers with $\tilde \ell \geq 2$. Let $(\beta_{on}^{(i)}, q_{on}^{(i)})$ be the local online basis functions in \eqref{eqn:new_online_local_v}-\eqref{eqn:new_online_local_p} and $(\beta_{glo}^{(i)}, q_{glo}^{(i)})$ be the global one in \eqref{eqn:new_online_global_v}-\eqref{eqn:new_online_global_p}. Then, we have 
	$$\norm{\beta_{glo}^{(i)}- \beta_{on}^{(i)}}_V^2 + \norm{q_{glo}^{(i)}-q_{on}^{(i)}}_s^2 \leq \tilde E \big( \norm{\beta_{glo}^{(i)}}_a^2 + \norm{\pi q_{glo}^{(i)}}_s^2 \big),$$
where $\tilde E = C(1+\Lambda^{-1})\Big(1+C^{-1}(1+\Lambda^{-1})^{-\frac{1}{2}})\Big)^{1-\tilde \ell}$.
Furthermore, we have 
\begin{eqnarray*}
\Norm{\sum_{i=1}^{N_c} (\beta_{glo}^{(i)} - \beta_{on}^{(i)})}_V^2 &+& \Norm{\sum_{i=1}^{N_c}  \pi(q_{glo}^{(i)} - q_{on}^{(i)})}_s^2  \\
&\leq& \widetilde C \sum_{i=1}^{N_c} \big( \norm{  (\beta_{glo}^{(i)} - \beta_{on}^{(i)})}_V^2  
+ \norm{ \pi(q_{glo}^{(i)} - q_{on}^{(i)} )}_s^2 \big).
\end{eqnarray*}
\end{lemma}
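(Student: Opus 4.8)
The plan is to observe that the online basis functions are characterized by systems of exactly the same algebraic form as the (offline) velocity basis functions in \eqref{eqn:velocity_1}--\eqref{eqn:velocity_2} and \eqref{eqn:global_velocity_1}--\eqref{eqn:global_velocity_2}, the only change being that the right-hand sides are the residual functionals $\mathcal{R}_i$ and $r_i$ instead of the $s$-pairings against $p_j^{(i)}$. The decisive structural fact is that $\mathcal{R}_i$ and $r_i$ are supported in $\omega_i$: since $\chi_i$ vanishes outside $\omega_i$, we have $\mathcal{R}_i(v)=0$ for every $v\in V$ vanishing on $\omega_i$ and $r_i(q)=0$ for every $q\in Q$ vanishing on $\omega_i$, and $b_i$ is the restriction of $b$ to $\omega_i$. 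Consequently the pair $(\beta_{glo}^{(i)}-\beta_{on}^{(i)},\,q_{glo}^{(i)}-q_{on}^{(i)})$ solves, on $\Omega\setminus\overline{\omega_i}$, the homogeneous saddle-point system satisfied by $(\psi_j^{(i)}-\psi_{j,ms}^{(i)},\,q_j^{(i)}-q_{j,ms}^{(i)})$ outside $K_i$ in Lemma~\ref{lem:local}. One may therefore transport the proofs of Lemma~7 in \cite{chung2018constraintmixed} and Lemma~3 in \cite{chung2018fast} essentially verbatim, replacing $K_i$ by $\omega_i$ and $K_i^+$ by $\omega_i^+$ throughout; this is precisely why the authors omit the details.

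For the single-basis estimate I would introduce a cutoff function $\eta_i$ built from the partition of unity $\{\chi_j\}$, equal to $1$ on $\Omega\setminus\omega_{i,\tilde\ell-1}$ and $0$ on $\omega_{i,\tilde\ell-2}$, with the usual pointwise bound on $\abs{\nabla\eta_i}$. Testing the difference of \eqref{eqn:new_online_global_v}--\eqref{eqn:new_online_global_p} and \eqref{eqn:new_online_local_v}--\eqref{eqn:new_online_local_p} against $v=\eta_i^2(\beta_{glo}^{(i)}-\beta_{on}^{(i)})$ together with a matching test function in $Q$, and using the constraint equation \eqref{eqn:new_online_global_p} along with the spectral bound coming from \eqref{eqn:sp_pressure_1}--\eqref{eqn:sp_pressure_2} (which controls $\norm{q-\pi q}_s$ by $\Lambda^{-1/2}$ times a $\tilde\kappa^{-1/2}$-weighted divergence norm), one derives a one-step contraction of the form $\norm{\cdot}_{V(\Omega\setminus\omega_{i,\tilde\ell-1})}^2+\norm{\pi(\cdot)}_s^2 \le \big(1+C^{-1}(1+\Lambda^{-1})^{-1/2}\big)^{-1}\big(\norm{\cdot}_{V(\Omega\setminus\omega_{i,\tilde\ell-2})}^2+\cdots\big)$. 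Iterating this inequality through the $\tilde\ell$ oversampling layers and closing with the global stability bound \eqref{eqn:G_con} (so that $\norm{\beta_{glo}^{(i)}}_a^2+\norm{\pi q_{glo}^{(i)}}_s^2$ bounds the seed data) yields the stated decay with $\tilde E = C(1+\Lambda^{-1})\big(1+C^{-1}(1+\Lambda^{-1})^{-\frac12}\big)^{1-\tilde\ell}$.

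For the second, quasi-local, inequality I would expand $\Norm{\sum_{i=1}^{N_c}(\beta_{glo}^{(i)}-\beta_{on}^{(i)})}_V^2$ into diagonal and cross terms and exploit the finite-overlap property of the oversampled neighbourhoods: each $\omega_i^+$ meets at most $C(\tilde\ell+1)^d$ of the others, so a Cauchy--Schwarz argument over this bounded overlap --- exactly as in Lemma~3 of \cite{chung2018fast}, where the exponentially small tails of $\beta_{glo}^{(i)}-\beta_{on}^{(i)}$ outside $\omega_i^+$ are absorbed with the same family of cutoff functions --- converts the cross terms into $\widetilde C\sum_{i=1}^{N_c}\big(\norm{\beta_{glo}^{(i)}-\beta_{on}^{(i)}}_V^2+\norm{\pi(q_{glo}^{(i)}-q_{on}^{(i)})}_s^2\big)$ with $\widetilde C=C(1+\Lambda^{-1})(\ell+1)^d$, which is the asserted bound.

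The main obstacle is the one-step contraction inside the cutoff argument: because $\divg{(\eta_i v)}\ne\eta_i\,\divg{v}$, testing with $\eta_i^2(\beta_{glo}^{(i)}-\beta_{on}^{(i)})$ generates a commutator term involving $v\cdot\nabla\eta_i$ that must be reabsorbed. Controlling $\norm{\tilde\kappa^{-1/2}\,v\cdot\nabla\eta_i}_{L^2}$ by $\norm{v}_a$ --- which is exactly the role of the weight $\tilde\kappa=\kappa\sum_j\abs{\nabla\chi_j}^2$ in the $s$-inner product --- and then feeding that estimate into the spectral bound to recover a genuine contraction factor strictly below $1$ is the only delicate point; the remainder is the same bookkeeping as in the offline proofs of Lemmas~\ref{lem:local} and \ref{lem:add_norm_sp}.
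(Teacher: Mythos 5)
Your proposal is correct and follows exactly the route the paper intends: the paper omits this proof precisely because, as you observe, the online systems \eqref{eqn:new_online_local_v}--\eqref{eqn:new_online_local_p} and \eqref{eqn:new_online_global_v}--\eqref{eqn:new_online_global_p} have the same saddle-point structure as \eqref{eqn:velocity_1}--\eqref{eqn:velocity_2} and \eqref{eqn:global_velocity_1}--\eqref{eqn:global_velocity_2} with right-hand sides supported in $\omega_i$, so the cutoff-function contraction argument of Lemma~\ref{lem:local} and the finite-overlap argument of Lemma~\ref{lem:add_norm_sp} transfer verbatim with $K_i$, $K_i^+$ replaced by $\omega_i$, $\omega_i^+$. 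Your sketch of the one-step contraction (including the commutator term $v\cdot\nabla\eta_i$ absorbed via the weight $\tilde\kappa$) and of the overlap-counting Cauchy--Schwarz step is a faithful reconstruction of that omitted argument.
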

\reviii{
\begin{lemma}[Lemma 8 in \cite{chung2018constraintmixed}]\label{lem:dis_inf_sup}
Assume that $\ell = O(\log(\kappa/H^2))$. For any $q \in Q_{aux}$ with $s(q,1) = 0$, there is $u \in V_{ms}$ such that 
$$\norm{q}_s \leq C_{ms} \frac{b(u,q)}{\norm{u}_V},$$
where $C_{ms}>0$ is a constant. 
\end{lemma}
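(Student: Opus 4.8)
\emph{Proof idea.} The plan is to establish the discrete inf-sup estimate in two stages: prove it first at the global (non-localized) level by means of Lemma~\ref{lem:1_mixed}, and then transfer it to the localized space $V_{ms}$ using the exponential decay of the global velocity basis (Lemmas~\ref{lem:local}--\ref{lem:add_norm_sp}), absorbing the discrepancy into the divergence part of the $V$-norm. For the first stage, given $q\in Q_{aux}$ with $s(q,1)=0$, I would invoke Lemma~\ref{lem:1_mixed} with $p_{aux}=q$ to obtain $u_{glo}\in V_{glo}$ and a zero-mean $p\in Q$ with $a(u_{glo},v)-b(v,p)=0$ for all $v\in V_0$ and $b(u_{glo},w)=s(q,w)$ for all $w\in Q$. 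Taking $w=q$ gives $b(u_{glo},q)=s(q,q)=\norm{q}_s^2$, and letting $w$ range over $L^2(\Omega)$ identifies $\divg{u_{glo}}=\tilde\kappa q$ almost everywhere, so that $\int_\Omega\tilde\kappa^{-1}\abs{\divg{u_{glo}}}^2\diff x=\norm{q}_s^2$; together with the stability of the constraint energy minimizing construction (which bounds $\norm{u_{glo}}_a$ by a constant times $\norm{q}_s$, cf.\ \cite{chung2018constraintmixed}) this yields $\norm{u_{glo}}_V\le C_0\norm{q}_s$. Thus the asserted inequality already holds with $u_{glo}\in V_{glo}$ in place of $u$.

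For the second stage I would expand $u_{glo}=\sum_{i=1}^{N}\sum_{j=1}^{J_i}c_j^{(i)}\psi_j^{(i)}$ in the global basis and set $u_{ms}:=\sum_{i=1}^{N}\sum_{j=1}^{J_i}c_j^{(i)}\psi_{j,ms}^{(i)}\in V_{ms}$. Using the $s$-orthonormality of $\{p_j^{(i)}\}$ and the stability of the constraint energy minimizing basis in the $V$-norm, one has $\sum_{i,j}(c_j^{(i)})^2\lesssim\norm{u_{glo}}_V^2\lesssim\norm{q}_s^2$. Applying Lemma~\ref{lem:add_norm_sp} to the coefficients $\{c_j^{(i)}\}$, and then bounding each inner sum over $j$ by Cauchy--Schwarz together with Lemma~\ref{lem:local} and $\norm{p_j^{(i)}}_s=1$, one obtains
\begin{equation*}
\norm{u_{glo}-u_{ms}}_V^2\;\le\;\widetilde C\sum_{i=1}^{N}\Norm{\sum_{j=1}^{J_i}c_j^{(i)}\big(\psi_j^{(i)}-\psi_{j,ms}^{(i)}\big)}_V^2\;\le\;\widetilde C\,E\,\Big(\max_{i}J_i\Big)\sum_{i,j}\big(c_j^{(i)}\big)^2\;\le\;C\,\widetilde C\,E\,\norm{q}_s^2 .
\end{equation*}
Since $\widetilde C\,E=C(1+\Lambda^{-1})^{2}(\ell+1)^{d}\big(1+C^{-1}(1+\Lambda^{-1})^{-1/2}\big)^{1-\ell}$ and $\Lambda^{-1}$ is at worst of order $\kappa_{max}/H^{2}$, the hypothesis $\ell=O(\log(\kappa/H^{2}))$ makes $\widetilde C\,E$ as small as one wishes; I would fix $\ell$ so that $\delta:=(C\,\widetilde C\,E)^{1/2}\le\tfrac12$.

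Combining the two stages, and using $\divg{u_{glo}}=\tilde\kappa q$ once more,
\begin{equation*}
b(u_{ms},q)=\norm{q}_s^2-\int_\Omega q\,\divg{(u_{glo}-u_{ms})}\diff x\;\ge\;\norm{q}_s^2-\norm{q}_s\,\norm{u_{glo}-u_{ms}}_V\;\ge\;(1-\delta)\norm{q}_s^2 ,
\end{equation*}
while $\norm{u_{ms}}_V\le\norm{u_{glo}}_V+\norm{u_{glo}-u_{ms}}_V\le(C_0+\delta)\norm{q}_s$, so that $\norm{q}_s\le C_{ms}\,b(u_{ms},q)/\norm{u_{ms}}_V$ with $C_{ms}=(C_0+\delta)/(1-\delta)$. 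The hard part will be the second stage: one must verify that the algebraic stability of the constraint energy minimizing basis truly gives $\sum_{i,j}(c_j^{(i)})^2\lesssim\norm{q}_s^2$ with a harmless constant (the full $V$-norm, rather than $\norm{\cdot}_a$, seems to be needed here, since individual basis functions may carry very little energy), and --- more critically --- that the polynomial overlap factor $(\ell+1)^{d}$ and the factor $1+\Lambda^{-1}$, which may be as large as $\kappa_{max}/H^{2}$, are genuinely swallowed by the geometric decay once $\ell=O(\log(\kappa/H^{2}))$. This balance is exactly what the hypothesis on $\ell$ provides; the remaining manipulations in the first and third stages are routine Cauchy--Schwarz and triangle-inequality estimates.
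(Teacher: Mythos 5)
The paper does not actually prove this lemma: it is imported verbatim as Lemma~8 of \cite{chung2018constraintmixed}, so there is no in-paper proof to compare against. Your reconstruction is nonetheless sound and follows the standard route for such discrete inf-sup results, which is also the route of the cited reference: establish the inf-sup bound for the global space $V_{glo}$ via Lemma~\ref{lem:1_mixed} (where $b(u_{glo},q)=\norm{q}_s^2$ and $\divg{u_{glo}}=\tilde\kappa q$ give the divergence part of the $V$-norm for free), then pass to $V_{ms}$ by a perturbation argument in which $\norm{u_{glo}-u_{ms}}_V$ is made small relative to $\norm{q}_s$ using Lemmas~\ref{lem:local}--\ref{lem:add_norm_sp} and the hypothesis $\ell=O(\log(\kappa/H^2))$. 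The two points you flag as delicate both go through: the coefficient bound $\sum_{i,j}(c_j^{(i)})^2\lesssim\norm{u_{glo}}_a^2\le\norm{u_{glo}}_V^2$ is exactly the open-mapping-theorem estimate for $G_1:Q_{aux}\to V_{glo}$ that this paper itself invokes in the proof of its main theorem (together with the $s$-orthonormality of the $p_j^{(i)}$), and the absorption of the polynomial factors $(\ell+1)^d$ and $(1+\Lambda^{-1})$ into the geometric decay is precisely what the logarithmic choice of $\ell$ is designed to achieve. One small caveat: the first-stage stability $\norm{u_{glo}}_a\lesssim\norm{q}_s$ is not immediate from the energy identity alone (it requires controlling the auxiliary pressure through the continuous inf-sup condition \eqref{eqn:inf-sup}, as in the remark following \eqref{eqn:G_con}), so you are right to lean on the cited reference there rather than claim it as routine.
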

}
The main result in the research reads as follows.
\begin{theorem}
Assume that $u \in V$ is the solution to \eqref{eqn:first}-\eqref{eqn:second} and for $m \in \mathbb{N}$, $u_{ms}^{m} \in V_{ms}^{m}$ is the approximated solution to \eqref{eqn:var_1}-\eqref{eqn:var_2}. Then, there are constants $C_0 = C_0(M,\ell, d, \Lambda)$, $C_1 = C_1(M,\tilde \ell,d,\Lambda)$ and $C_2 = C_2(M,\Lambda)$ such that 
$$ \norm{u - u_{ms}^{m+1}}_V^2 \leq \big(C_0 E + C_1 \tilde E + C_2 \tilde \theta \big) \reviii{C_{ms}}\norm{u-u_{ms}^m}_V^2,$$
where $M = \max_K n_K$ with $n_K$ being the number of coarse nodes of the coarse element $K$ and $\tilde \theta$ is the chosen adaptive parameter satisfying $\tilde \theta = 1-\theta$.
\end{theorem}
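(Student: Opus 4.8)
\emph{Proof strategy.} The plan is to run the standard contraction argument for residual-driven adaptive enrichment, carefully accommodating the mixed structure. It rests on four ingredients: (a) a two-sided (reliability and efficiency) equivalence between the energy error $\norm{u-u_{ms}^m}_V^2$ and the sum $\sum_{i=1}^{N_c}\eta_{i,m}^2$ of squared local indicators; (b) the fact that enriching $V_{ms}^m$ with the online velocity bases $\beta_{on}^{(i)}$ over the marked patches removes the marked part of the residual up to the online localization defect $\tilde E$ of Lemma \ref{lem:online_local}; (c) the bulk-marking criterion of Step 2, which forces the unmarked residual to carry at most the fraction $\tilde\theta=1-\theta$ of the total; and (d) the irreducible offline-basis localization gap, quantified by $E$ through Lemmas \ref{lem:local}--\ref{lem:add_norm_sp}. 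Assembling these produces the claimed contraction with factor $(C_0 E + C_1\tilde E + C_2\tilde\theta)C_{ms}$.

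For ingredient (a) I would test the defining relations \eqref{eqn:new_online_global_v}--\eqref{eqn:new_online_global_p} of the global online basis against their own solutions $(\beta_{glo}^{(i)},q_{glo}^{(i)})$ and bound the resulting residual functionals by their dual norms (using the stability \eqref{eqn:inf-sup} and the properties of $\pi$) to obtain $\norm{\beta_{glo}^{(i)}}_a^2+\norm{\pi q_{glo}^{(i)}}_s^2\lesssim\eta_{i,m}^2$; since $\sum_i\chi_i=1$, the pair $\sum_{i=1}^{N_c}(\beta_{glo}^{(i)},q_{glo}^{(i)})$ represents the whole residual of $(u_{ms}^m,p_{ms}^m)$, so bounded overlap of the neighborhoods (the constant $M$) together with the discrete inf-sup bound of Lemma \ref{lem:dis_inf_sup} controlling the pressure part yields reliability, $\norm{u-u_{ms}^m}_V^2\lesssim C_{ms}\sum_i\eta_{i,m}^2$; expanding $\mathcal R_i^m$ and $r_i^m$ in terms of $u-u_{ms}^m$ and $p-p_{ms}^m$ with the cutoff/partition-of-unity identities gives the reverse (efficiency) inequality.

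For the remaining ingredients, let $\mathcal I_m=\{1,\dots,k\}$ be the marked set. I would invoke the quasi-optimality of the mixed solution $(u_{ms}^{m+1},p_{ms}^{m+1})$ in $V_{ms}^{m+1}\times Q_{aux}$ — again via Lemma \ref{lem:dis_inf_sup}, which is where the factor $C_{ms}$ and the pressure-projection error $\norm{p-\pi p}_s$ (controlled by $\Lambda^{-1}$) enter — to estimate $\norm{u-u_{ms}^{m+1}}_V$ by the error of the candidate $w:=u_{ms}^m+\sum_{i\in\mathcal I_m}\beta_{on}^{(i)}\in V_{ms}^{m+1}$. Writing $\beta_{on}^{(i)}=\beta_{glo}^{(i)}-(\beta_{glo}^{(i)}-\beta_{on}^{(i)})$, the term $\sum_{i\in\mathcal I_m}\beta_{glo}^{(i)}$ annihilates the residual on the marked patches, so what is left is controlled by: the unmarked indicators, for which bulk marking gives $\sum_{i\notin\mathcal I_m}\eta_{i,m}^2\le\tilde\theta\sum_i\eta_{i,m}^2$; the online localization defect $\sum_{i\in\mathcal I_m}\bigl(\norm{\beta_{glo}^{(i)}-\beta_{on}^{(i)}}_V^2+\norm{\pi(q_{glo}^{(i)}-q_{on}^{(i)})}_s^2\bigr)$, which by Lemma \ref{lem:online_local} and ingredient (a) is $\lesssim\tilde E\sum_i\eta_{i,m}^2$ with the $(\tilde\ell+1)^d$ factor absorbed into $C_1$; and the offline localization gap, obtained by representing the relevant part of the error through the global offline bases $\{\psi_j^{(i)}\}$ and localizing via Lemmas \ref{lem:local}--\ref{lem:add_norm_sp}, which contributes $\lesssim E\,\norm{u-u_{ms}^m}_V^2$ with the $(\ell+1)^d$ factor absorbed into $C_0$. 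Converting every $\sum_i\eta_{i,m}^2$ back to $\norm{u-u_{ms}^m}_V^2$ by ingredient (a) and collecting the constants $C_0(M,\ell,d,\Lambda)$, $C_1(M,\tilde\ell,d,\Lambda)$, $C_2(M,\Lambda)$ finishes the argument.

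The hard part will be the bookkeeping forced by the mixed formulation with a non-enriched pressure space: one must show that adding only the velocity bases $\beta_{on}^{(i)}$ (not the associated $\pi q_{on}^{(i)}$) still delivers a quasi-optimal $u_{ms}^{m+1}$, which is precisely where Lemma \ref{lem:dis_inf_sup} and the careful treatment of $\norm{p-\pi p}_s$ are unavoidable. A secondary difficulty is the overlap accounting: the indicators $\eta_{i,m}$ live on overlapping neighborhoods $\omega_i$ and the online bases on the enlargements $\omega_i^+$, so moving between local and global norms needs Lemma \ref{lem:add_norm_sp} (and its online form Lemma \ref{lem:online_local}) and introduces the polynomial factors $(\ell+1)^d$ and $(\tilde\ell+1)^d$, which one must check cannot overwhelm the exponential smallness of $E$ and $\tilde E$ once $\ell$ and $\tilde\ell$ are taken large.
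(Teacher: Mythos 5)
Your overall strategy coincides with the paper's: quasi-optimality of $u_{ms}^{m+1}$ via the discrete inf-sup condition (Lemma \ref{lem:dis_inf_sup} together with \cite[Theorem 12.5.17]{brenner2007mathematical}) applied to a candidate in $V_{ms}^{m+1}$, followed by a three-term splitting into the online localization defect (order $\tilde E$, via Lemma \ref{lem:online_local}), the unmarked residual (order $\tilde \theta$, via bulk marking and the efficiency bound $\eta_i \leq \norm{u-u_{ms}^m}_{V(\omega_i)}$), and the offline localization gap (order $E$, via Lemmas \ref{lem:local}--\ref{lem:add_norm_sp}), with the overlap constant $M$ and the factors $(\ell+1)^d$, $(\tilde\ell+1)^d$ entering exactly as you anticipate.

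The one genuine gap is your candidate function. With $w = u_{ms}^m + \sum_{i\in\mathcal{I}}\beta_{on}^{(i)}$ alone, the difference $u-w$ contains, besides the three small terms you list, the function $\eta := u_{ms}^m - u + \sum_{i=1}^{N_c}\beta_{glo}^{(i)}$. Summing the global online systems shows that $\eta$ solves a problem of the type in Lemma \ref{lem:1_mixed} with datum $-\pi\sum_i q_{glo}^{(i)}$, hence $\eta = \sum_{i,j}c_j^{(i)}\psi_j^{(i)} \in V_{glo}$; it is generically nonzero, of the same order as the current error (indeed $\norm{\eta}_a \leq 2\norm{u-u_{ms}^m}_a$), and it does not belong to $V_{ms}^{m+1}$. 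Left inside $u-w$ it contributes a term with constant $O(1)$ rather than $O(E)$, and no contraction results. The fix --- which is what the paper does, and what your phrase about ``representing the relevant part of the error through the global offline bases and localizing'' is implicitly reaching for --- is to augment the candidate by the localized representative, $w = u_{ms}^m + \sum_{i\in\mathcal{I}}\beta_{on}^{(i)} - \sum_{i,j}c_j^{(i)}\psi_{j,ms}^{(i)}$ (sign conventions aside), so that only the localization difference $\sum_{i,j}c_j^{(i)}(\psi_j^{(i)}-\psi_{j,ms}^{(i)})$ survives; bounding that term requires the coefficient estimate $\sum_{i,j}(c_j^{(i)})^2 \leq C_m\norm{\eta}_a^2$ obtained from the open mapping theorem applied to $G_1$, a step your outline omits. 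With that correction your argument matches the paper's.
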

\begin{proof}
To simplify the analysis, we may assume the test function $v$ and $\sum_{i} \beta_{glo}^{(i)}$ to be divergence free. Recall the definition of the global online basis function corresponding to $\omega_i$: find $(\beta_{glo}^{(i)}, q_{glo}^{(i)}) \in V_0 \times Q$ such that 
\begin{eqnarray}
	a(\beta_{glo}^{(i)},v) - b(v,q_{glo}^{(i)}) = & \mathcal{R}_i(v) & \quad \forall v \in V, \label{eqn:on_glo_v_div}\\ 
	s(\pi q_{glo}^{(i)}, \pi q) + b(\beta_{glo}^{(i)}, q ) = & r_i(q) & \quad \forall q \in Q.
\label{eqn:on_glo_p_div}
\end{eqnarray}
After summing over all the neighborhoods $\omega_i$ for $i= 1,\cdots, N_c$, one obtains
\begin{eqnarray}
	a\bigg(u_{ms}^m - u + \sum_{i=1}^{N_c} \beta_{glo}^{(i)},v\bigg) - b\bigg(v,\sum_{i=1}^{N_c} q_{glo}^{(i)}\bigg) = & 0 & \quad \forall v \in  V, \label{eqn:glo_online_v} \\
	b\bigg(u_{ms}^m - u + \sum_{i=1}^{N_c} \beta_{glo}^{(i)}, q \bigg) = & s\bigg(\pi \bigg (- \sum_{i=1}^{N_c} q_{glo}^{(i)}\bigg), q\bigg) & \quad \forall q \in Q. \label{eqn:glo_online_p}
\end{eqnarray}
By Lemma \ref{lem:1_mixed}, there exists a set of constants $\{ c_j^{(i)}: 1\leq j \leq J_i, ~ 1\leq i \leq N \}$ such that 
$$ \eta := u_{ms}^m - u + \sum_{i=1}^{N_c} \beta_{glo}^{(i)} = \sum_{i=1}^N \sum_{j=1}^{J_i} c_j^{(i)} \psi_j^{(i)}.$$ 
Denote $ \xi := \sum_{i,j} c_j^{(i)} q_j^{(i)}$ and $p_{aux} := \sum_{i,j} c_j^{(i)} p_j^{(i)}$. Then, we have 
\begin{eqnarray*}
	a(\eta,v) - b(v,\xi) = & 0 & \quad v \in V, \\
	s(\pi \xi, \pi q) + b(\eta,q) = & s(p_{aux}, q) & \quad q \in Q.
\end{eqnarray*}
Note that $G(p_{aux}) = (\eta,\xi)$. Next, we estimate the error of localization, namely, the term $\sum_{i,j} c_j^{(i)} (\psi_j^{(i)}- \psi_{j,ms}^{(i)})$. By Lemmas \ref{lem:local} and \ref{lem:add_norm_sp}, we have 
\begin{eqnarray*}
	\Norm{\sum_{i=1}^N \sum_{j=1}^{J_i} c_j^{(i)} ( \psi_j^{(i)} - \psi_{j,ms}^{(i)})}_V^2 \leq \widetilde C(1+\Lambda^{-1}) (\ell +1)^d E \sum_{i=1}^N \sum_{j=1}^{J_i} (c_j^{(i)})^2.
\end{eqnarray*}
Applying the corollary of open mapping theorem \cite{ciarlet2013linear} to the mapping $G_1: Q_{aux} \to V_{glo}$, there is a constant $C_m >0$ such that 
$$ \norm{p_{aux}}_s^2 = \sum_{i=1}^N \sum_{j=1}^{J_i} (c_j^{(i)})^2 \leq C_m \norm{\eta}_a^2.$$
Taking $v = \sum_{i=1}^{N_c} \beta_{glo}^{(i)}$ in \eqref{eqn:glo_online_v}, $q = \pi \sum_{i=1}^{N_c} q_{glo}^{(i)}$ in \eqref{eqn:glo_online_p} and summing over these equations, we have 
$$ \Norm{\sum_{i=1}^{N_c} \beta_{glo}^{(i)}}_a^2 + \Norm{\sum_{i=1}^{N_c} \pi q_{glo}^{(i)}}_s^2 = a\bigg(u-u_{ms}^m, \sum_{i=1}^{N_c} \beta_{glo}^{(i)} \bigg) \implies  \Norm{\sum_{i=1}^{N_c} \beta_{glo}^{(i)}}_a \leq \norm{u - u_{ms}^m }_a.$$
Hence, we have 
\begin{equation}
\label{eqn:estimate_3}
\begin{split}
	\Norm{\sum_{i=1}^N \sum_{j=1}^{J_i} c_j^{(i)} ( \psi_j^{(i)} - \psi_{j,ms}^{(i)})}_V^2 & \leq  \widetilde C(1+\Lambda^{-1}) (\ell +1)^d E\sum_{i=1}^N \sum_{j=1}^{J_i} (c_j^{(i)})^2 \\
& \leq  \widetilde C C_m(1+\Lambda^{-1})(\ell +1)^d  E\norm{\eta}_a^2 \\
& \leq  2\widetilde C C_m(1+\Lambda^{-1})(\ell+1)^d E\bigg(\norm{u - u_{ms}^m}_a^2 + \Norm{\sum_{i=1}^{N_c} \beta_{glo}^{(i)}}_a^2 \bigg) \\
& \leq  4\widetilde C C_m(1+\Lambda^{-1})(\ell+1)^d E\norm{u-u_{ms}^m}_V^2.
\end{split}
\end{equation}
After that, we estimate the error of localization for the online basis functions.
Note that by Lemma \ref{lem:online_local} we have
$$ \norm{\beta_{glo}^{(i)} - \beta_{on}^{(i)}}_V^2 + \norm{\pi(q_{glo}^{(i)}- q_{on}^{(i)})}_s^2 \leq \tilde E \big( \norm{\beta_{glo}^{(i)}}_a^2 + \norm{\pi q_{glo}^{(i)}}_s^2 \big).$$
Taking $v = \beta_{glo}^{(i)}$ in \eqref{eqn:on_glo_v_div}, $ q = q_{glo}^{(i)}$ in \eqref{eqn:on_glo_p_div} and summing these two equations up, by Cauchy-Schwartz inequality one obtains 
\begin{eqnarray*}
 \norm{\beta_{glo}^{(i)}}_a^2 + \norm{\pi q_{glo}^{(i)}}_s^2 & = & a(u-u_{ms}^m, \chi_i \beta_{glo}^{(i)}) + b(u-u_{ms}^m, \chi_i q_{glo}^{(i)}) \\
 & \leq& \norm{u-u_{ms}^m}_{a(\omega_i)} \norm{\chi_i \beta_{glo}^{(i)}}_a + \norm{\tilde \kappa^{-1/2} \divg{u-u_{ms}^m}}_{L^2(\omega_i)} \norm{\chi_i q_{glo}^{(i)}}_s \\
 & \leq & \norm{u-u_{ms}^m}_{V(\omega_i)} \big( \norm{\chi_i \beta_{glo}^{(i)}}_a^2 + \norm{\chi_i q_{glo}^{(i)}}_s^2 \big)^{1/2} \\
 & \leq & (1+\Lambda^{-1}) \norm{u-u_{ms}}_{V(\omega_i)} \big( \norm{\beta_{glo}^{(i)}}_a^2 + \norm{q_{glo}^{(i)}}_s^2 \big)^{1/2}. 
 \end{eqnarray*}
It implies that 
\begin{equation}
\label{eqn:estimate_1}
\begin{split}
	\sum_{i=1}^{N_c} \big( \norm{\beta_{glo}^{(i)}- \beta_{on}^{(i)}}_V^2 + \norm{q_{glo}^{(i)} - q_{on}^{(i)}}_s^2 \big) & \leq  \tilde E(1+\Lambda^{-1}) \sum_{i=1}^{N_c} \norm{u-u_{ms}^m}_{V(\omega_i)}^2 \\
	& \leq  M\tilde E(1+\Lambda^{-1}) \norm{u-u_{ms}^m}_V^2.
\end{split}
\end{equation}
Finally, we may prove the required convergency. Let $\mathcal{I} = \{ 1, 2, \cdots, k \} \subset \{ 1,2,\cdots, N_c \}$ be the set of indices and we add online basis functions $\beta_{ms}^{(i)}$ for $i\in \mathcal{I}$ into the space $V_{ms}^m$ to form $V_{ms}^{m+1}$. Define the following function $w \in V_{ms}^{m+1}$: 
$$ w = u_{ms}^m - \sum_{i\in \mathcal{I}} \beta_{on}^{(i)} + \sum_{i=1}^N \sum_{j=1}^{J_i} c_j^{(i)} \psi_{j,ms}^{(i)}.$$
\reviii{Since the discrete inf-sup condition holds (Lemma \ref{lem:dis_inf_sup}), by the result of \cite[Theorem 12.5.17]{brenner2007mathematical}, we obtain}
\begin{eqnarray*}
	\norm{u-u_{ms}^{m+1}}_V^2 & \lesssim & \reviii{C_{ms}}\norm{u-w}_V^2 \\
	& = & \reviii{C_{ms}}\Norm{ \sum_{i \in \mathcal{I}} (\beta_{on}^{(i)} - \beta_{glo}^{(i)}) - \sum_{i \notin \mathcal{I}} \beta_{glo}^{(i)} + \sum_{i=1}^N \sum_{j=1}^{J_i} c_j^{(i)}( \psi_j^{(i)}- \psi_{j,ms}^{(i)})}_V^2 \\
	& \leq & 3\reviii{C_{ms}} \bigg( \underbrace{\Norm{\sum_{i \in \mathcal{I}} \beta_{on}^{(i)} - \beta_{glo}^{(i)}}_V^2}_{\text{\Large \ding{192}}} + \underbrace{\Norm{\sum_{i \notin \mathcal{I}} \beta_{glo}^{(i)}}_V^2}_{\text{\Large \ding{193}}} + \underbrace{\Norm{\sum_{i=1}^N \sum_{j=1}^{J_i} c_j^{(i)} (\psi_j^{(i)}- \psi_{j,ms}^{(i)})}_V^2}_{\text{\Large \ding{194}}} \bigg).
\end{eqnarray*}
By Lemma \ref{lem:online_local}, \eqref{eqn:estimate_3}, and \eqref{eqn:estimate_1} we have
\begin{eqnarray}
\label{eqn:estimate_1_fin}
\text{\Large \ding{192}} & \leq & \widetilde C M\tilde E(1+\Lambda^{-1}) \norm{u-u_{ms}^m}_V^2,\\
\label{eqn:estimate_3_fin}
\text{\Large \ding{194}} & \leq & 4 \widetilde C C_m (1+\Lambda^{-1})(\ell +1 )^d E\norm{u-u_{ms}^m}_V^2.
\end{eqnarray}
Next, we denote $\widetilde \beta = \sum_{i \notin \mathcal{I}} \beta_{glo}^{(i)}$ and $\widetilde q = \sum_{i \notin \mathcal{I}} q_{glo}^{(i)}$. By the definition of the online basis function, one may have 
\begin{eqnarray*}
	a(\widetilde \beta,v ) - b(v, \widetilde q) = & \sum_{i\notin \mathcal{I}} \mathcal{R}_i(v) & \quad \forall v \in \widehat V, \\
	s(\pi \widetilde q, \pi q) + b(\widetilde \beta,q) = & \sum_{i\notin \mathcal{I}} r_i(q) & \quad \forall q \in Q.
\end{eqnarray*}
Taking $v = \widetilde \beta$, $q = \widetilde q$ and adding two equations together, we have
\begin{eqnarray*}
	\norm{\widetilde \beta}_a^2 + \norm{\pi \widetilde q}_s^2 & = & \displaystyle{\sum_{i \notin \mathcal{I}} \mathcal{R}_i (\widetilde \beta) + r_i (\widetilde q)} \\
	& \leq & \displaystyle{\sum_{i \notin \mathcal{I}}} (\norm{\mathcal{R}_i}_{a^*}^2 + \norm{r_i}_{s^*}^2)^{1/2} (\norm{\widetilde \beta}_{a(\omega_i)}^2 + \norm{\widetilde q}_{s(\omega_i)}^2 )^{1/2} \\
	& \leq & (1+\Lambda^{-1})^{1/2} \displaystyle{\sum_{i \notin \mathcal{I}}} \eta_i (\norm{\widetilde \beta}_{a(\omega_i)}^2 + \norm{\pi \widetilde q}_{s(\omega_i)}^2 )^{1/2} \\
	& \leq & \sqrt{\tilde \theta(1+\Lambda^{-1})} \bigg( \displaystyle{\sum_{i=1}^{N_c}} \eta_i^2\bigg)^{1/2}
	\bigg( \displaystyle{\sum_{i=1}^{N_c} \norm{\widetilde \beta}_{a(\omega_i)}^2 + \norm{\pi \widetilde q}_{s(\omega_i)}^2  }\bigg)^{1/2}  \\
	& \leq & \sqrt{M\tilde \theta(1+\Lambda^{-1})} \bigg( \displaystyle{\sum_{i=1}^{N_c}} \eta_i^2\bigg)^{1/2} (\norm{\widetilde \beta}_{a}^2 + \norm{\pi \widetilde q}_{s}^2)^{1/2},
\end{eqnarray*}
where $\eta_i = (\norm{\mathcal{R}_i}_{a^*}^2 + \norm{r_i}_{s^*}^2)^{1/2}$ and $\tilde \theta = 1- \theta$. It implies that 
\begin{equation}
\label{eqn:estimate_2}
\begin{split}
	\norm{\widetilde \beta}_a^2 + \norm{\pi \widetilde q}_s^2 & \leq M\theta(1+\Lambda^{-1}) \sum_{i=1}^{N_c} \eta_i^2 
	 \leq M\theta(1+\Lambda^{-1}) \sum_{i=1}^{N_c} \norm{u - u_{ms}^m}_{V(\omega_i)}^2 \\
	& \leq M^2\tilde \theta(1+\Lambda^{-1}) \norm{u-u_{ms}^m}_V^2,
\end{split}
\end{equation}
where we use the fact that 
$$\abs{\mathcal{R}_i(v)} = \abs{a(u-u_{ms}^m,\chi_i v)} \leq \norm{u-u_{ms}^m}_{a(\omega_i)} \norm{v}_{a(\omega_i)} \quad \forall v \in V, $$
$$\abs{r_i(q)} = \abs{b(u-u_{ms}^m,\chi_i q)} \leq \norm{\tilde \kappa^{-1/2} \divg{u-u_{ms}^m}}_{L^2(\omega_i)} \norm{q}_{s(\omega_i)} \quad \forall q \in Q$$
$$ \implies \eta_i \leq \norm{u-u_{ms}^m}_{V(\omega_i)}.$$
Hence, the following estimate holds: 
\begin{equation}
\label{eqn:estimate_2_fin}
\text{\Large \ding{193}} \leq M^2\tilde \theta (1+\Lambda^{-1}) \norm{u-u_{ms}^m}_V^2.
\end{equation}
Combining \eqref{eqn:estimate_1_fin}, \eqref{eqn:estimate_3_fin}, and \eqref{eqn:estimate_2_fin}, we have 
\begin{equation}
\label{eqn:estimate_fin}
\norm{u- u_{ms}^{m+1}}_V^2 \leq (C_0 E +  C_1 \tilde E + C_2\tilde \theta ) \reviii{C_{ms}} \norm{u-u_{ms}^m}_V^2,
\end{equation}
where $C_0 = 12\widetilde C C_m(1+\Lambda^{-1})(\ell +1)^d$, $C_1 = 3\widetilde C M (1+\Lambda^{-1})$, and $C_2 = 3(1+\Lambda^{-1})M^2$.
This completes the proof.
\end{proof}

\begin{remark}
The quantity $\tilde \theta = 1- \theta$ is a user-defined parameter which can control the convergence rate of the online enrichment process. 
Note that the larger the parameter $\theta \in (0,1]$ used, the steeper the decay of error that occurs. 
This fact will be illustrated by our numerical experiments in the next section. On the other hand, 
the constant of exponential decay $\tilde E$ is defined in the online stage. One may choose a different number of layers in the construction of online basis functions to drive the error decay faster, even with a smaller number of layer $\ell$ in the offline stage.
\end{remark}

\section{Numerical experiment}
\label{sec:numerics}
In this section, we present some numerical results to show the efficiency of the proposed method. The computational domain is $\Omega = (0,1)^2$. We use a rectangular mesh for the partition of the domain dividing $\Omega$ into $T \times T$ equal coarse square blocks and further divide each coarse block into $n \times n$ equal square pieces. In other words, the fine mesh contains $Tn \times Tn$ fine rectangular elements with the mesh size $h = \frac{1}{Tn}$. The boundary condition is set to be $g = 0$. We test the performance by considering uniform enrichment ($\theta = 1$) and by using the online adaptive enrichment. In all the examples below, the term {\it energy error} refers to the following quantity:  
$$e_u := \frac{\norm{u-u_{ms}^m}_a}{ \norm{u}_a},$$ 
where $u$ is the reference solution solved on the fine mesh \revii{and $u_{ms}^m$ is the multiscale solution in the multiscale space $V_{ms}^m$. The index $m \in \mathbb{N}$ denotes the level of online iteration. In the examples below, we set the number of initial basis functions to be $J \equiv J_i = 3$.} \reviii{For each example below, the computational time in each online iteration is also reported in each table reporting the error quantity.} 
\begin{example}\label{exp:1}
	In this example, we set $T = 8$ and $n = 12$. The permeability field $\kappa$ used in this example is given in Figure \ref{fig:exp_1}. The source function $f$ is defined as follows: 
	$$ f(x) = \left \{ \begin{array}{cc}
	1 & x \in  (0,1/8)^2, \\
	-1 & x\in  (7/8,1)^2, \\
	0 & \text{otherwise},
	\end{array} \right .$$
	and thus the compatibility condition holds. The number of oversampling layers is $ \ell = 2$, $\tilde \ell = 2$. 
	\reviii{The computational time for offline construction is 5.9989 seconds.} 
	\revii{The profiles of the solutions are sketched in Figures \ref{fig:exp1_ref_sol} and \ref{fig:exp1_ms_sol}.} 
In Table \ref{tab:exp_1_theta_1}, we present the energy error for the case with uniform enrichment, that is, $\theta = 1$. One may observe a moderately fast convergence of the method.  In Table \ref{tab:exp_1_theta_0.1}, we present the energy error by using the online adaptive enrichment with $\theta = 0.1$. That is, only the basis functions related to the regions which account for the largest $10\%$ will be added in the online stage. Here, {\it DOF} stands for the dimension of the multiscale space $V_{ms}^m$. From the tables, we observe that smaller parameter $\theta$ leads to slower convergence. This confirms that the user-defined parameter is useful in controlling the convergence rate of the proposed adaptive method. 

Note that one may use a larger number of layers in online stage to further reduce the error of velocity. In Table \ref{tab:exp_1_ol4}, we present the results with offline number of layers $\ell = 2$ and set the online number of layers as $\tilde \ell = 4$. One may observe that the larger number of layers in the online stage leads to a faster decay in error reduction with less iterations.
	\begin{figure}[h!]	
	\centering
	\mbox{
	\subfigure[Example \ref{exp:1}.]{
	\centering
	\includegraphics[width=2.7in]{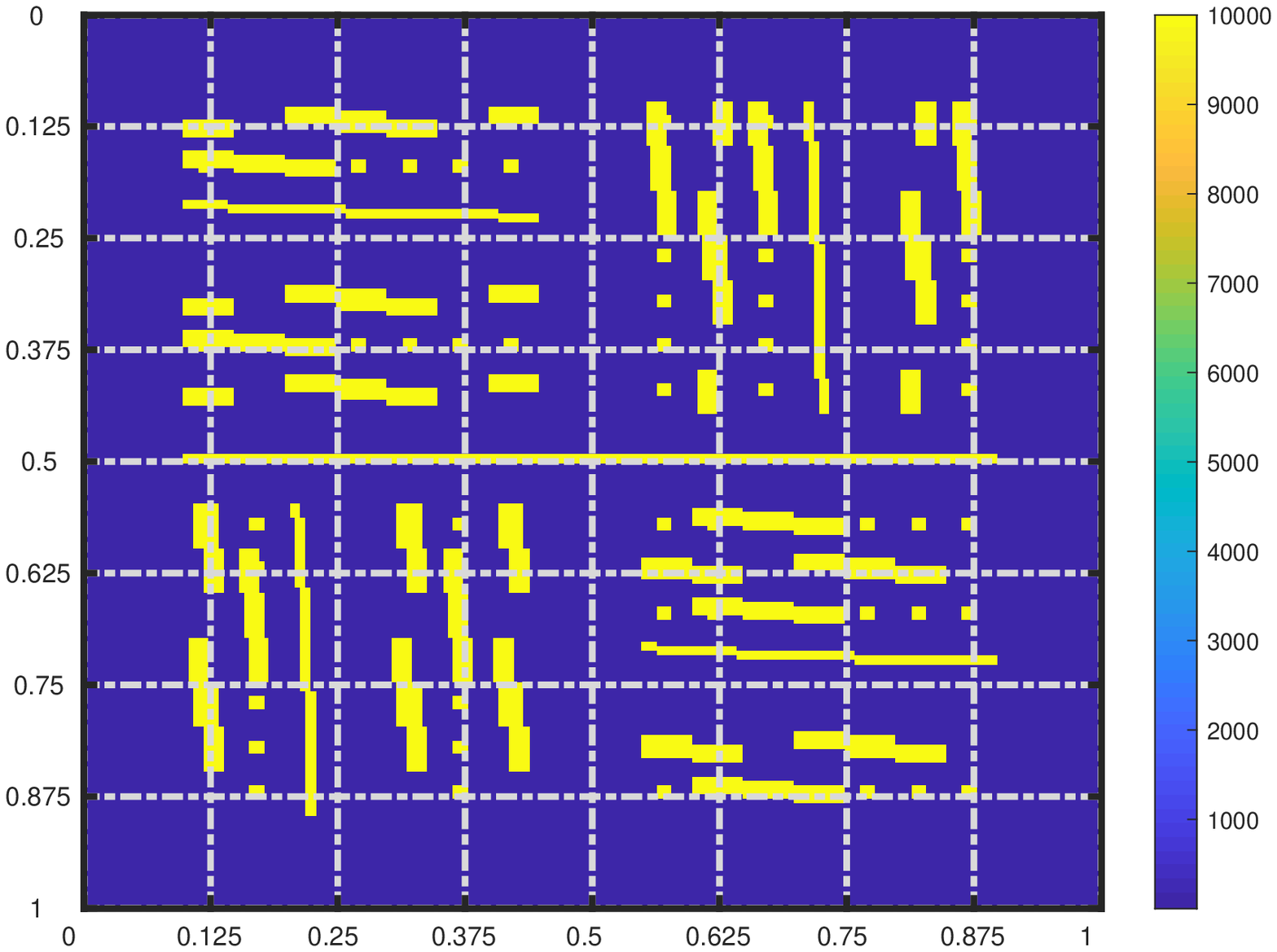}
	\label{fig:exp_1_kappa}}
	\subfigure[Example \ref{exp:2}.]{
	\centering
	\includegraphics[width=2.6in]{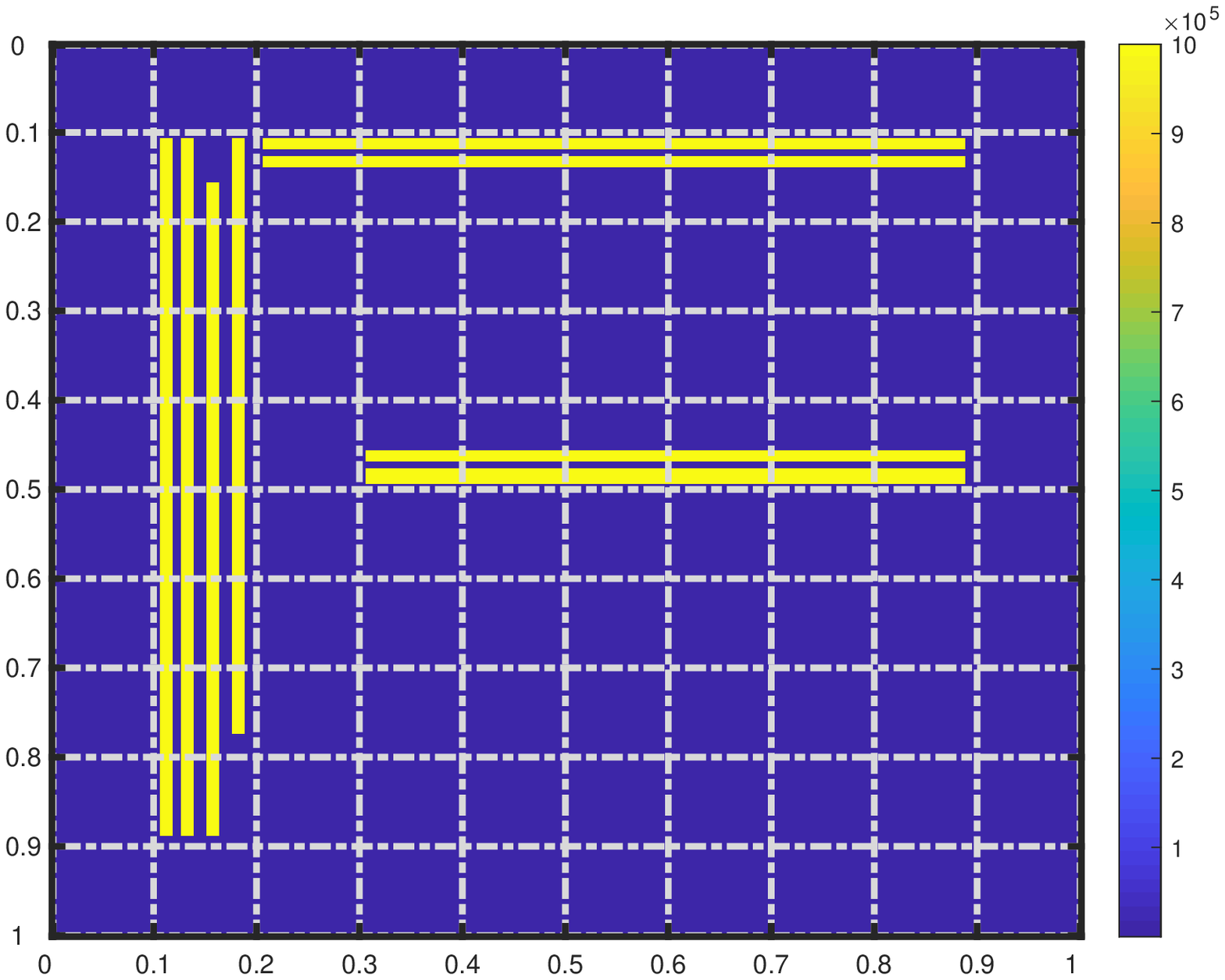}
	\label{fig:exp_2_kappa}}
	}
	\caption{The permeability field $\kappa$.}
	\label{fig:exp_1}
	\end{figure}
	
	\begin{figure}[h!]
	\centering
	\mbox{
	\subfigure[Pressure.]{
	\centering
	\includegraphics[width=2.6in]{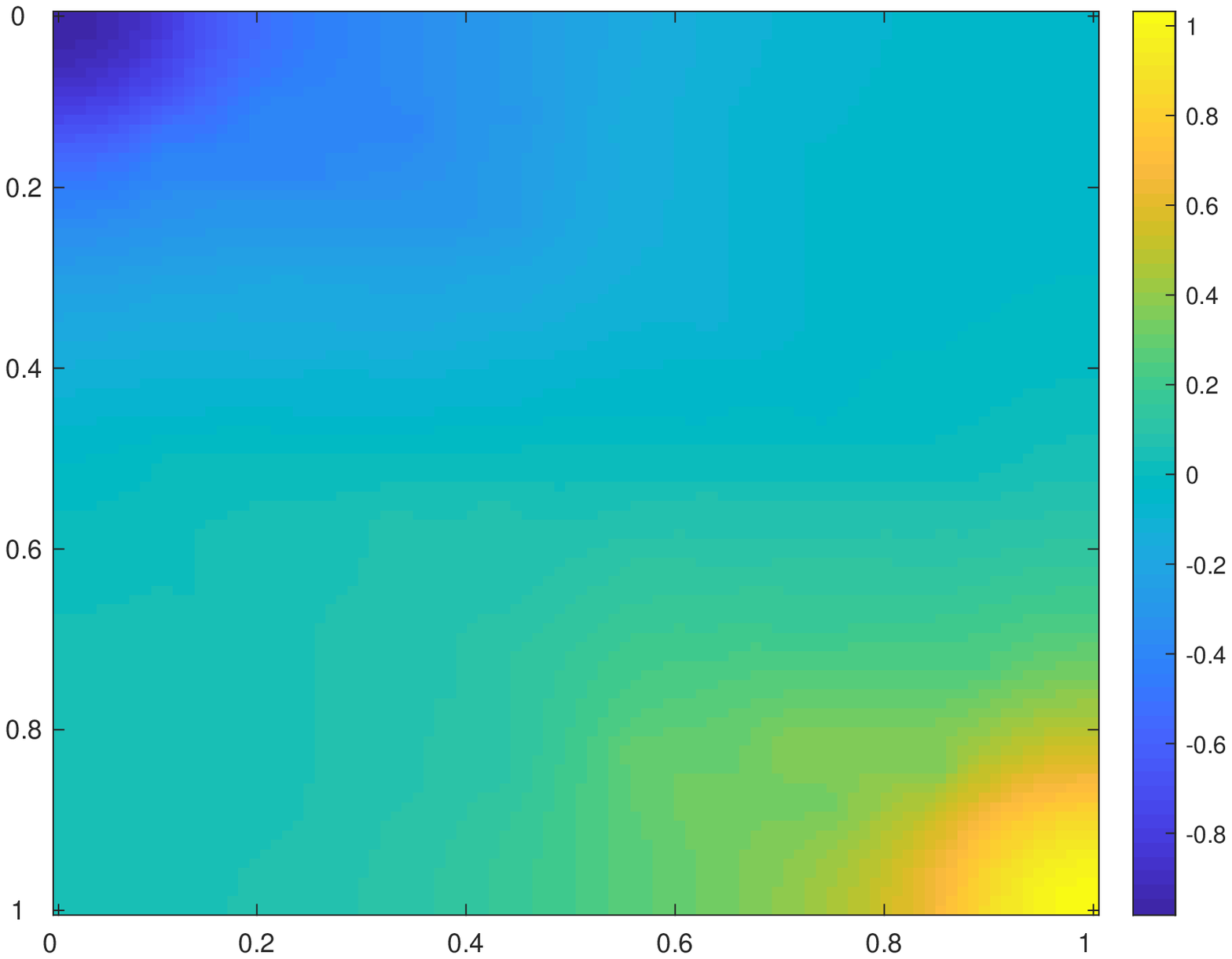}
	\label{fig:exp1_ref_p}}
	\subfigure[Velocity.]{
	\includegraphics[width=2.6in]{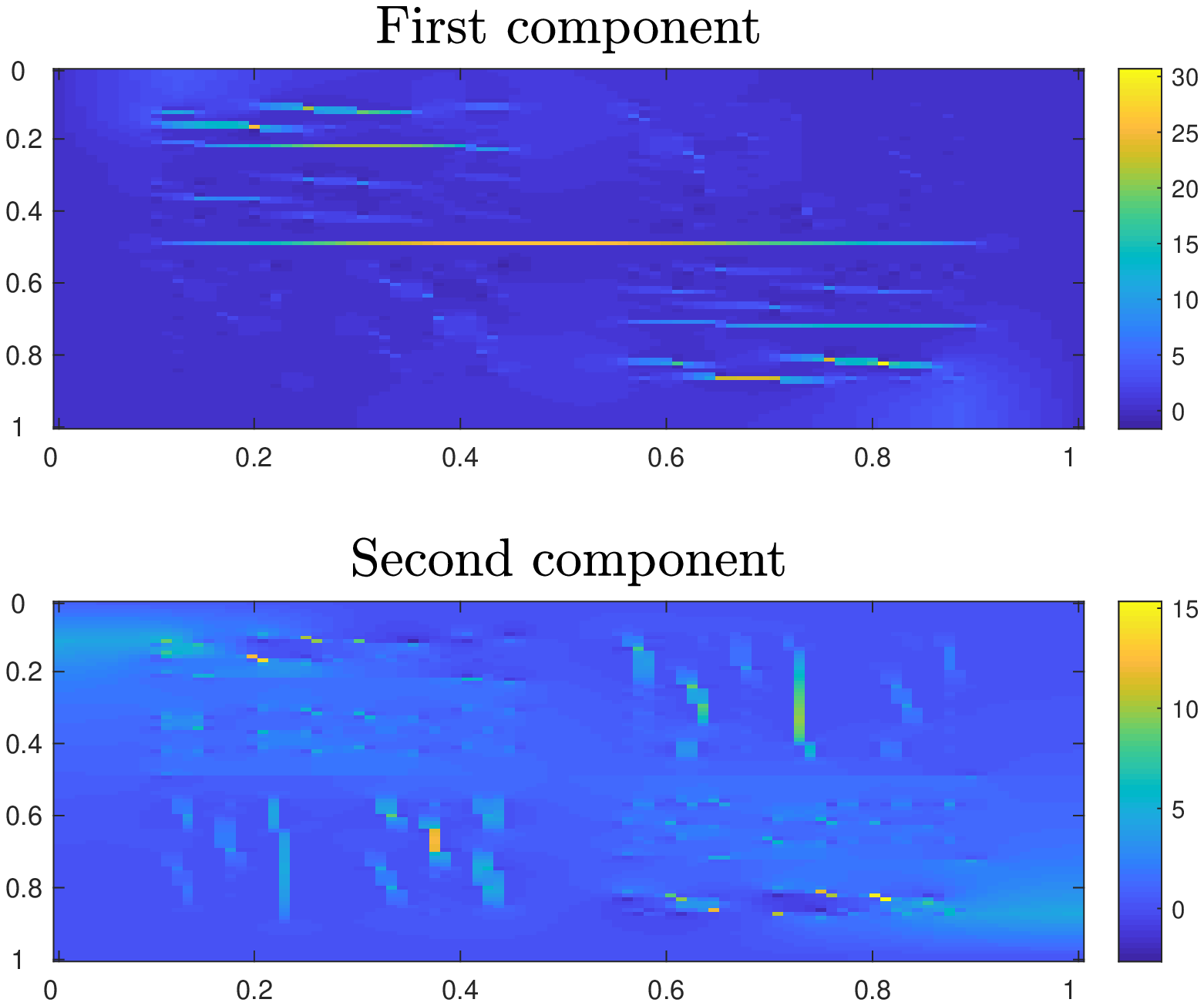}
	\label{fig:exp1_ref_v}}
	}
	\caption{Reference solution in Example \ref{exp:1}.}
	\label{fig:exp1_ref_sol}
	\end{figure}
	
	\begin{figure}[h!]
	\centering
	\mbox{
	\subfigure[Pressure.]{
	\centering
	\includegraphics[width=2.6in]{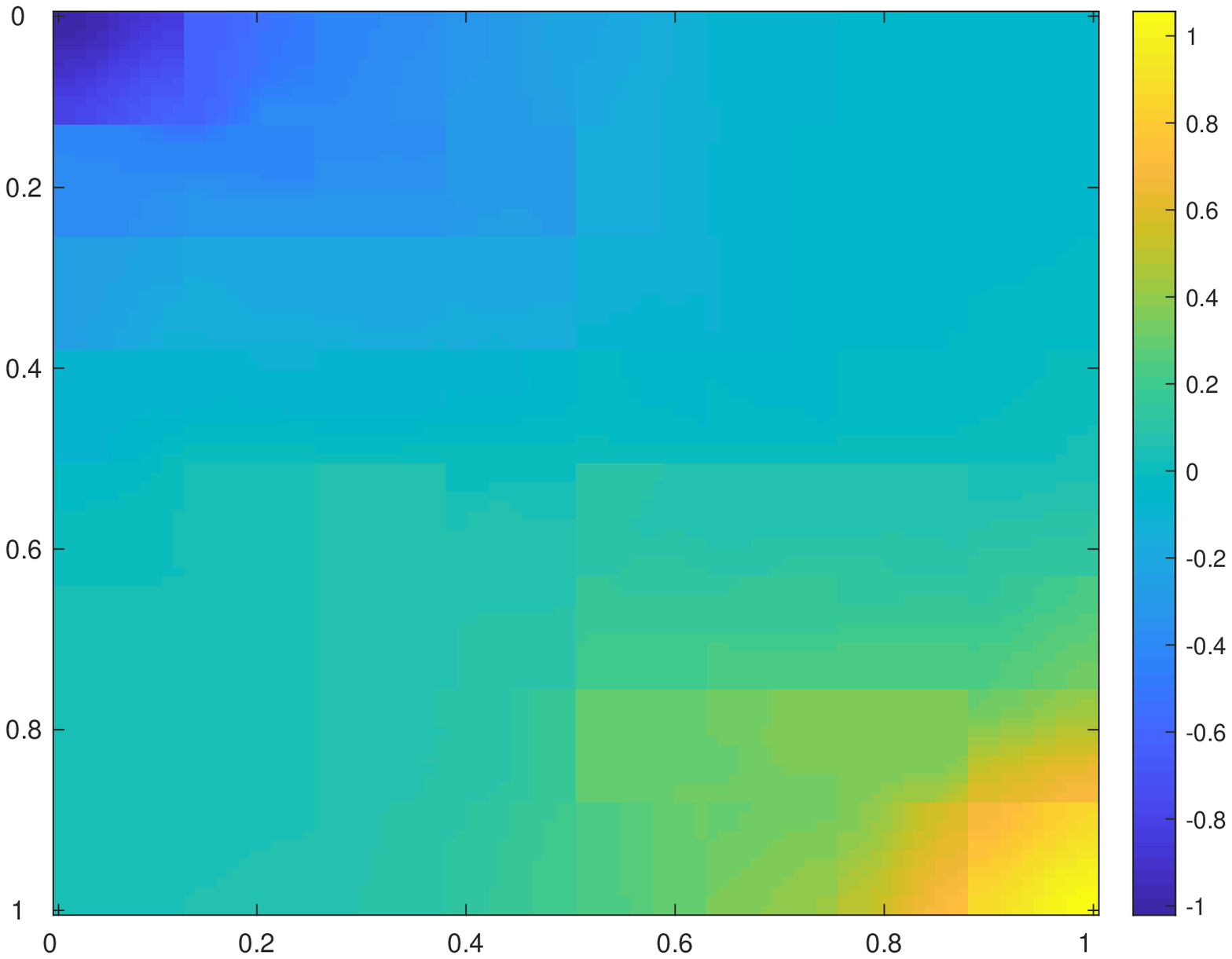}
	\label{fig:exp1_ms_p}}
	\subfigure[Velocity.]{
	\includegraphics[width=2.6in]{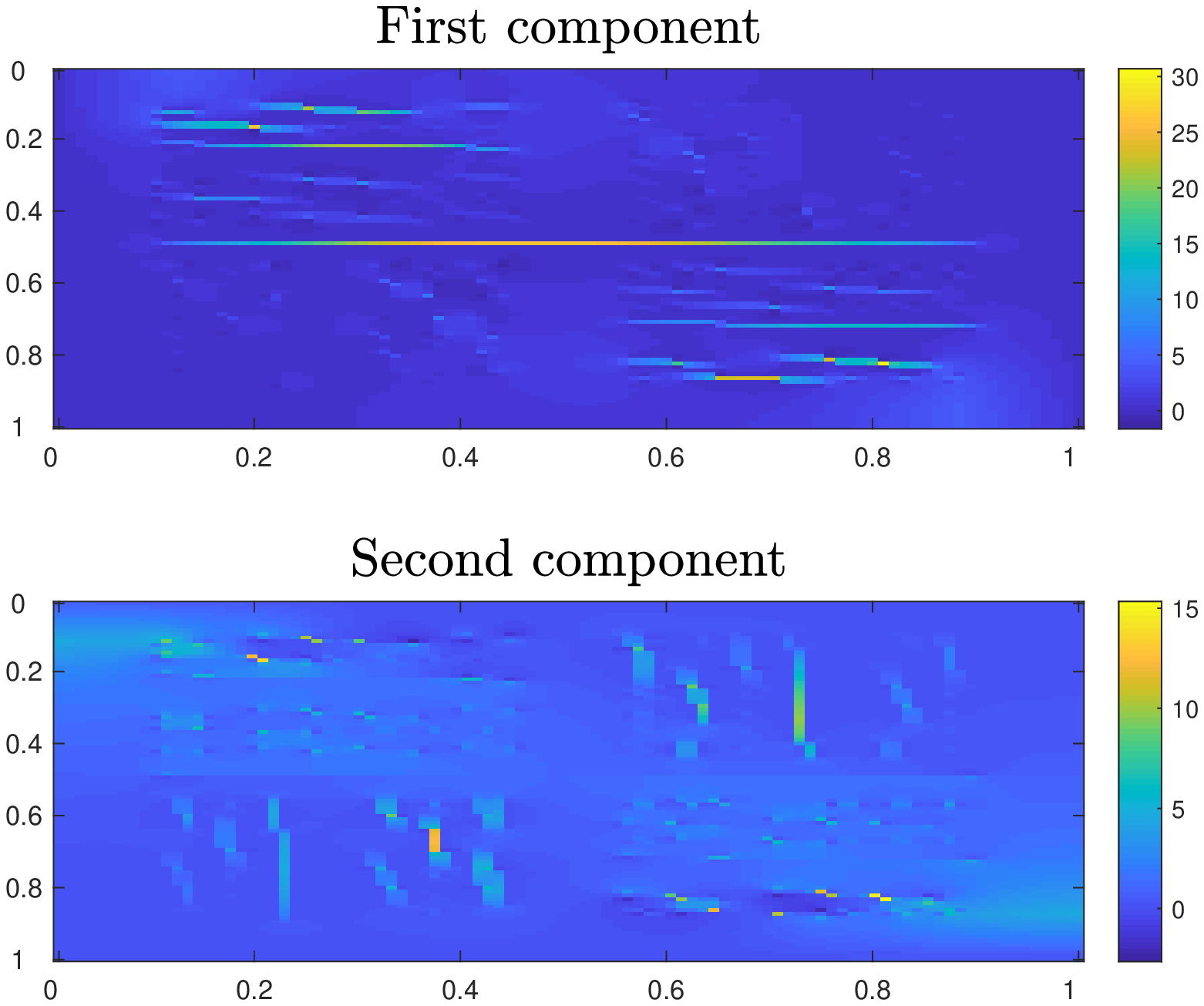}
	\label{fig:exp1_ms_v}}
	}
	\caption{Multiscale solution in Example \ref{exp:1} ($\theta = 1$; $m= 4$; $\tilde \ell = 2$).}
	\label{fig:exp1_ms_sol}
	\end{figure}

	\begin{table}[htbp!]
	\centering
	\begin{tabular}{ccccc}
	\hline 
		\revii{$J$} &  \revii{$m$} &  DOF & \revii{$e_u$}  & \reviii{CPU times (sec)} \\
	\hline 
		$3$ & $0$ & $192$ & \revii{$6.57590\%$}  & $-$ \\ 
		$3$ & $1$ & $241$ & \revii{$0.21606\%$}  & \reviii{4.6075} \\
		$3$ & $2$ & $290$ & \revii{$0.01968\%$}  & \reviii{4.3116} \\
		$3$ & $3$ & $339$ & \revii{$0.00165\%$}  & \reviii{4.3498} \\
		$3$ & $4$ & $388$ & \revii{$0.00013\%$}  & \reviii{4.2612} \\ 
	\hline
	\end{tabular}
	\caption{Uniform enrichment with $\theta= 1$, \revii{$\ell = 2$, and $\tilde \ell = 2$} (Example \ref{exp:1}).}
	\label{tab:exp_1_theta_1}
	\end{table}
	\begin{table}[htbp!]
	\centering
	\begin{tabular}{ccccc}
	\hline 
		\revii{$J$} &  \revii{$m$} &  DOF & \revii{$e_u$}  & \reviii{CPU times (sec)} \\
	\hline 
		$3$ & $0$ & $192$ & \revii{$6.57590\%$}  &$-$ \\
		$3$ & $1$ & $196$ & \revii{$2.89966\%$}  &\reviii{0.6599} \\ 
		$3$ & $2$ & $200$ & \revii{$0.89460\%$}  &\reviii{0.5291} \\ 
		$3$ & $3$ & $204$ & \revii{$0.66984\%$}  &\reviii{0.5327} \\ 
		$3$ & $4$ & $208$ & \revii{$0.38731\%$}  &\reviii{0.5876} \\ 
	\hline
	\end{tabular}
	\caption{Online adaptivity with $\theta= 0.1$, \revii{$\ell = 2$, and $\tilde \ell = 2$} (Example \ref{exp:1}).}
	\label{tab:exp_1_theta_0.1}
	\end{table}
	
	\begin{table}[htbp!]
	\centering
	\begin{tabular}{ccccc}
	\hline 
		\revii{$J$} &  \revii{$m$} &  DOF & \revii{$e_u$} & \reviii{CPU times (sec)}  \\
	\hline 
		$3$ & $0$ & $192$ & \revii{$6.57590\%$}  & $-$ \\
		$3$ & $1$ & $241$ & \revii{$0.00553\%$}  & \reviii{9.6672} \\ 
		$3$ & $2$ & $290$ & \revii{$5.37 \times 10^{-6}\%$} & \reviii{9.9367} \\	
	\hline
	\end{tabular}
	\caption{Uniform enrichment with $\theta= 1$, \revii{$\ell = 2$, and $\tilde \ell = 4$} (Example \ref{exp:1}).}
	\label{tab:exp_1_ol4}
	\end{table}
		
\end{example}

\begin{example}\label{exp:2}
In this example, we test the proposed method on another permeability field with high-contrast channels (see Figure \ref{fig:exp_2_kappa}). The mesh parameters are $T = 10$ and $n = 16$. The source function $f$ used in this example is defined as follows: 
$$ f(x)  = \left \{ \begin{array}{cc}
1 & x \in (0.2,0.4)^2, \\
-1 & x \in (0.7,0.9)^2, \\
0 & \text{otherwise},
\end{array} \right . $$
and thus the compatibility condition holds. In this example, \revii{the number of oversampling layers is $\ell = 1$. We set $\tilde \ell = 2$} to construct the online basis functions. 
\reviii{The computational time for offline computation is 12.9394 seconds in this example}. 
\revii{The reference and multiscale solutions of this example are provided below. See Figures \ref{fig:exp2_ref_sol} and \ref{fig:exp2_ms_sol}.} 

	\begin{figure}[h!]
	\centering
	\mbox{
	\subfigure[Pressure.]{
	\centering
	\includegraphics[width=2.6in]{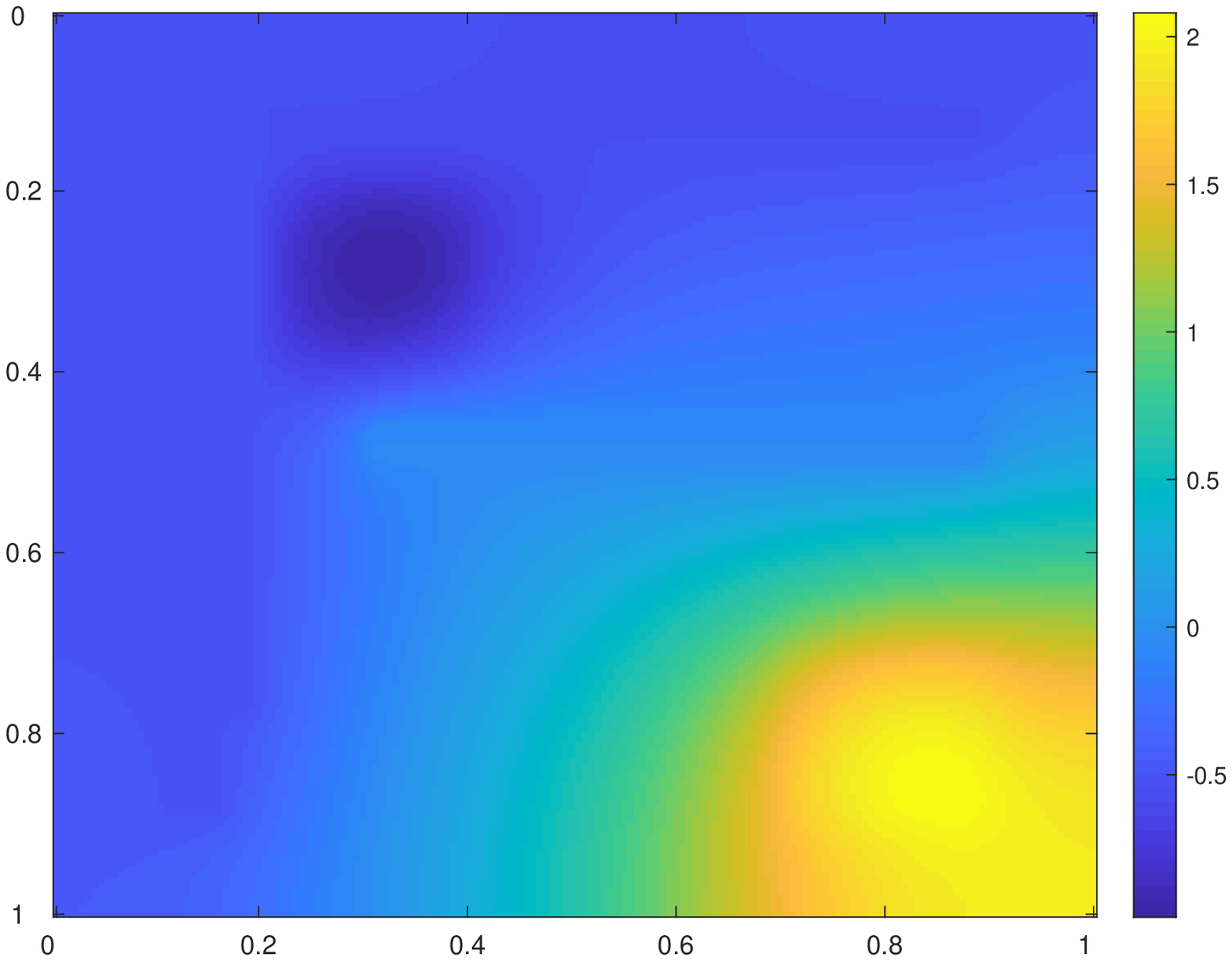}
	\label{fig:exp2_ref_p}}
	\subfigure[Velocity.]{
	\includegraphics[width=2.6in]{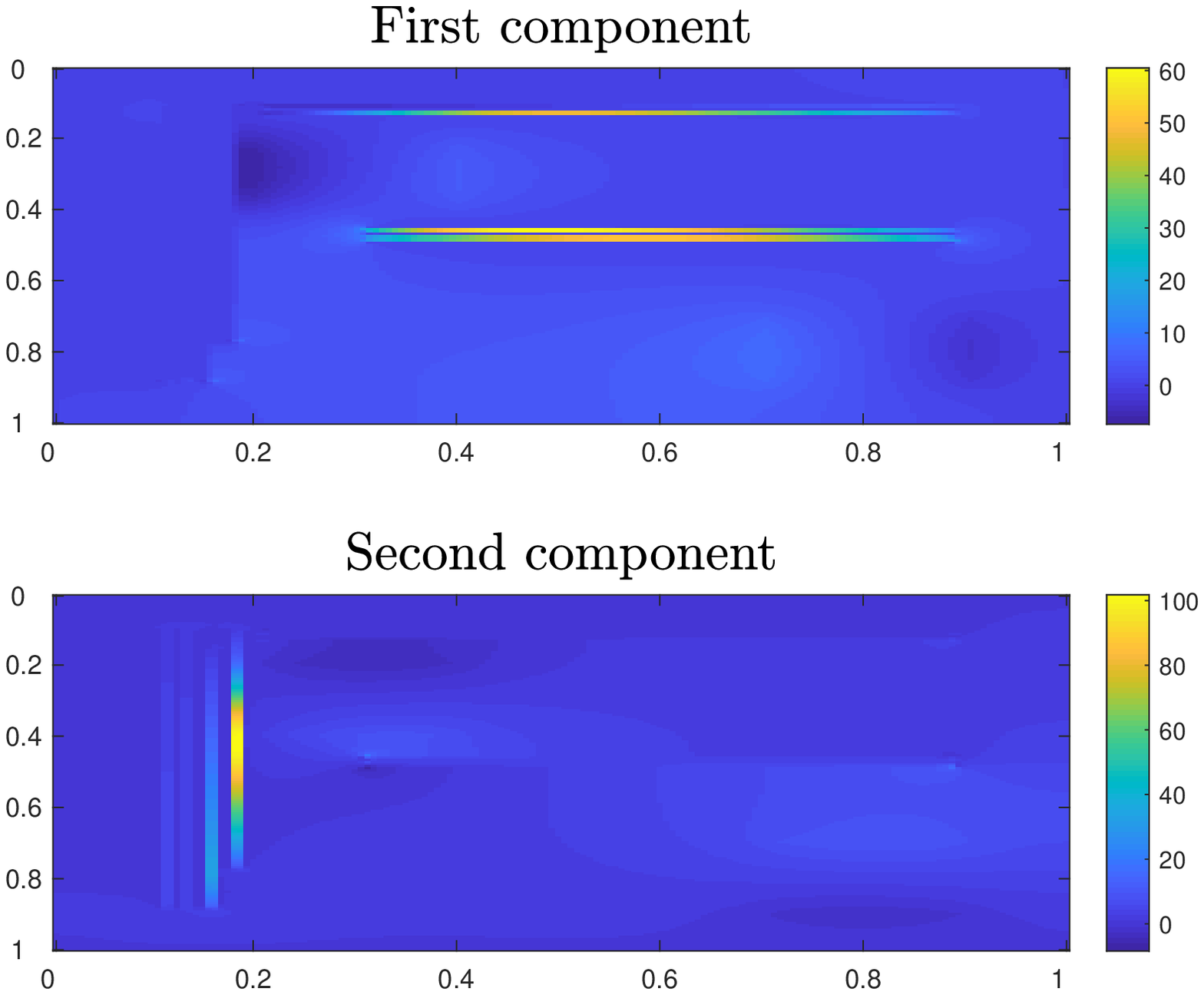}
	\label{fig:exp2_ref_v}}
	}
	\caption{Reference solution in Example \ref{exp:2}.}
	\label{fig:exp2_ref_sol}
	\end{figure}
	
	\begin{figure}[h!]
	\centering
	\mbox{
	\subfigure[Pressure.]{
	\centering
	\includegraphics[width=2.6in]{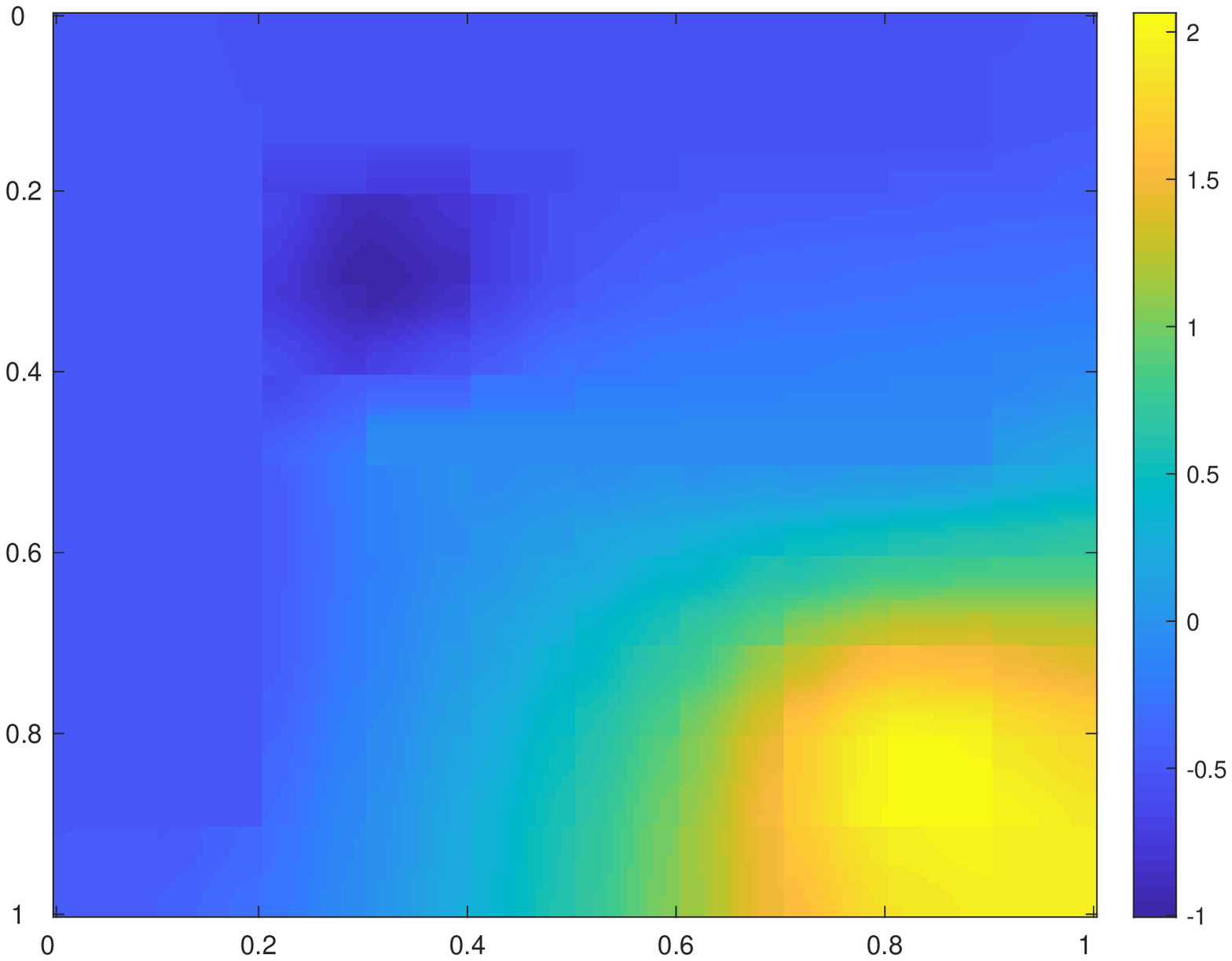}
	\label{fig:exp2_ms_p}}
	\subfigure[Velocity.]{
	\includegraphics[width=2.6in]{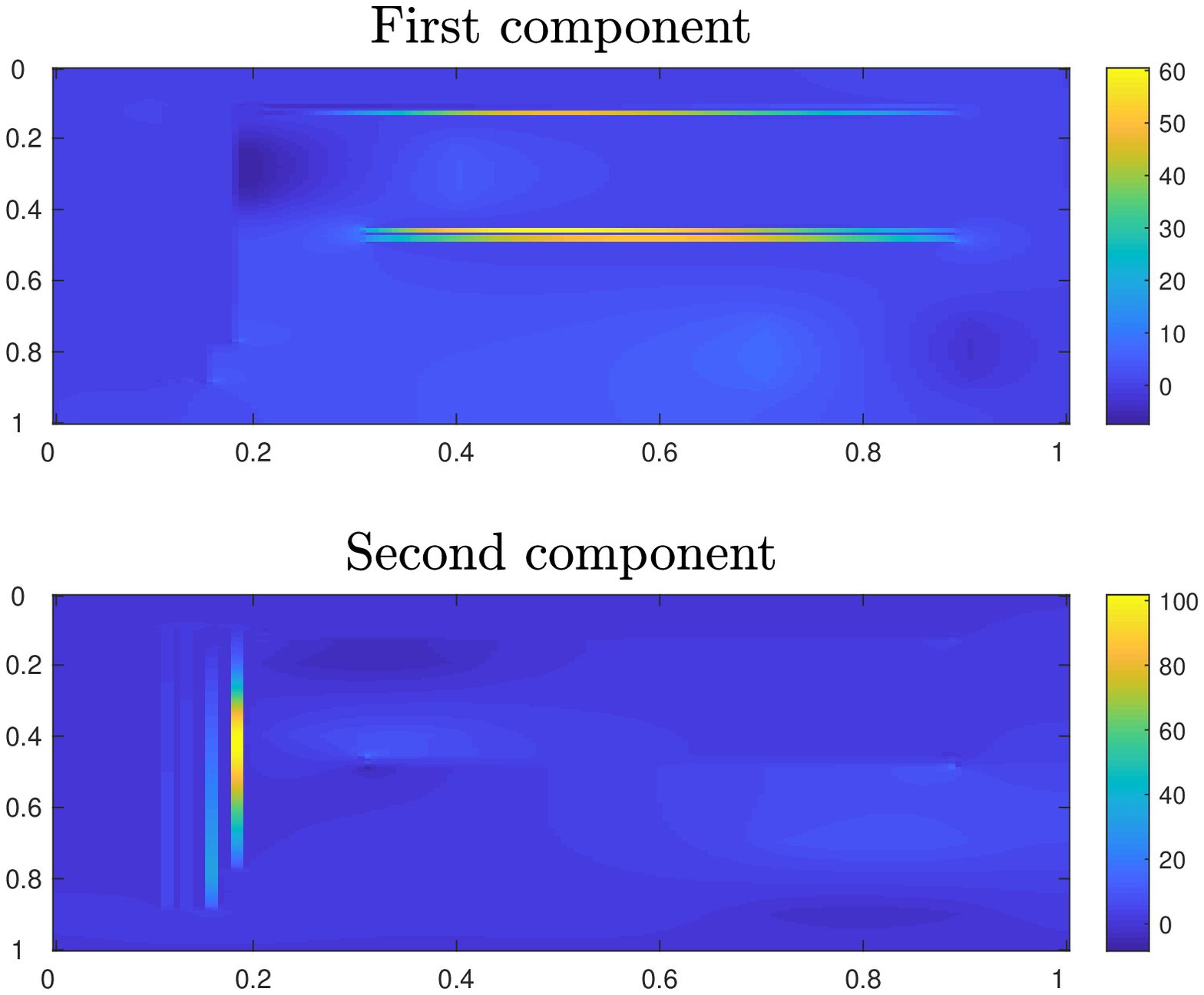}
	\label{fig:exp2_ms_v}}
	}
	\caption{Multiscale solution in Example \ref{exp:2} ($\theta = 1$; $m= 4$; $\tilde \ell = 2$).}
	\label{fig:exp2_ms_sol}
	\end{figure}
	
In Table \ref{tab:exp_2_theta_1}, we present the results with uniform enrichment. One may observe that the proposed online method can drive the energy error down fast even with a large error between the offline approximation and the fine-scale solution. Next, we test the performance of the method using online adaptivity. In Table \ref{tab:exp_2_theta_0.15}, the results with parameter $\theta = 0.15$ are presented and the convergence becomes slow comparing to the case of uniform enrichment, which confirms the analytical assertion. 

	\begin{table}[htbp!]
	\centering
	\begin{tabular}{ccccc}
	\hline 
		\revii{$J$} &  \revii{$m$} &  DOF & \revii{$e_u$} & \reviii{CPU times (sec)}  \\
	\hline 
		$3$ & $0$ & $300$ & \revii{$12.74319\%$} & $-$ \\ 
		$3$ & $1$ & $381$ & \revii{$1.70167\%$}  & \reviii{44.5020}  \\ 
		$3$ & $2$ & $462$ & \revii{$0.08707\%$}  & \reviii{46.3008} \\ 
		$3$ & $3$ & $543$ & \revii{$0.00581\%$}  & \reviii{50.2405} \\ 
	\hline
	\end{tabular}
	\caption{Uniform enrichment with $\theta= 1$, \revii{$\ell = 1$, and $\tilde \ell = 2$} (Example \ref{exp:2}).}
	\label{tab:exp_2_theta_1}
	\end{table}
	
	\begin{table}[htbp!]
	\centering
	\begin{tabular}{ccccc}
	\hline 
		\revii{$J$} &  \revii{$m$} &  DOF & \revii{$e_u$}  & \reviii{CPU times (sec)}\\
	\hline 
		$3$ & $0$ & $300$ & \revii{$12.74319\%$}  & $-$ \\ 
		$3$ & $1$ & $304$ & \revii{$3.79567\%$}  & \reviii{3.3094} \\ 
		$3$ & $2$ & $308$ & \revii{$1.13441\%$}  & \reviii{3.8641} \\ 
		$3$ & $3$ & $312$ & \revii{$0.71769\%$}  & \reviii{3.6112} \\ 
	\hline
	\end{tabular}
	\caption{Online adaptivity with $\theta= 0.15$, \revii{$\ell = 1$, and $\tilde \ell = 2$} (Example \ref{exp:2}).}
	\label{tab:exp_2_theta_0.15}
	\end{table}	
\end{example}

\begin{example}\label{exp:3}
\revii{We consider a benchmark SPE 10 test case \cite{christie2001tenth} with the grid parameters $T = n = 16$. The permeability field $\kappa$ for this example is sketched in Figure \ref{fig:kappa_exp3}. We take the source function defined as follows: 
$$ f(x)  = \left \{ \begin{array}{cc}
1 & x \in (0,1/16)^2, \\
-1 & x \in (15/16,1)^2, \\
0 & \text{otherwise}.
\end{array} \right . $$}

\revii{We take $\ell = \tilde \ell = 2$ in this example}. \reviii{The computational time for offline computation is 91.4529 seconds in this example}. 
\revii{The solution profiles of this example are reported in Figures \ref{fig:exp3_ref_sol} and \ref{fig:exp3_ms_sol}. 
The initial multiscale error ($m=0$) is relatively large (nearly $20\%$) in this case. One may observe that the proposed online enrichment is able to reduce the error of velocity efficiently. 
In particular, the relative energy error $e_u$ is quite small (around $0.002\%$) after three iterations. }

\begin{figure}[h!]
\centering
\includegraphics[width = 2.6in]{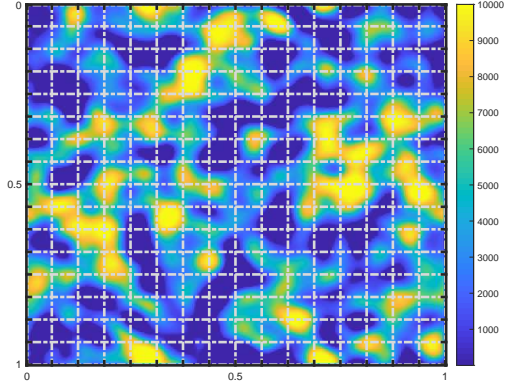}
\label{fig:kappa_exp3}
\caption{The permeability field $\kappa$ in Example \ref{exp:3}.}
\end{figure}

	\begin{figure}[h!]
	\centering
	\mbox{
	\subfigure[Pressure.]{
	\centering
	\includegraphics[width=2.6in]{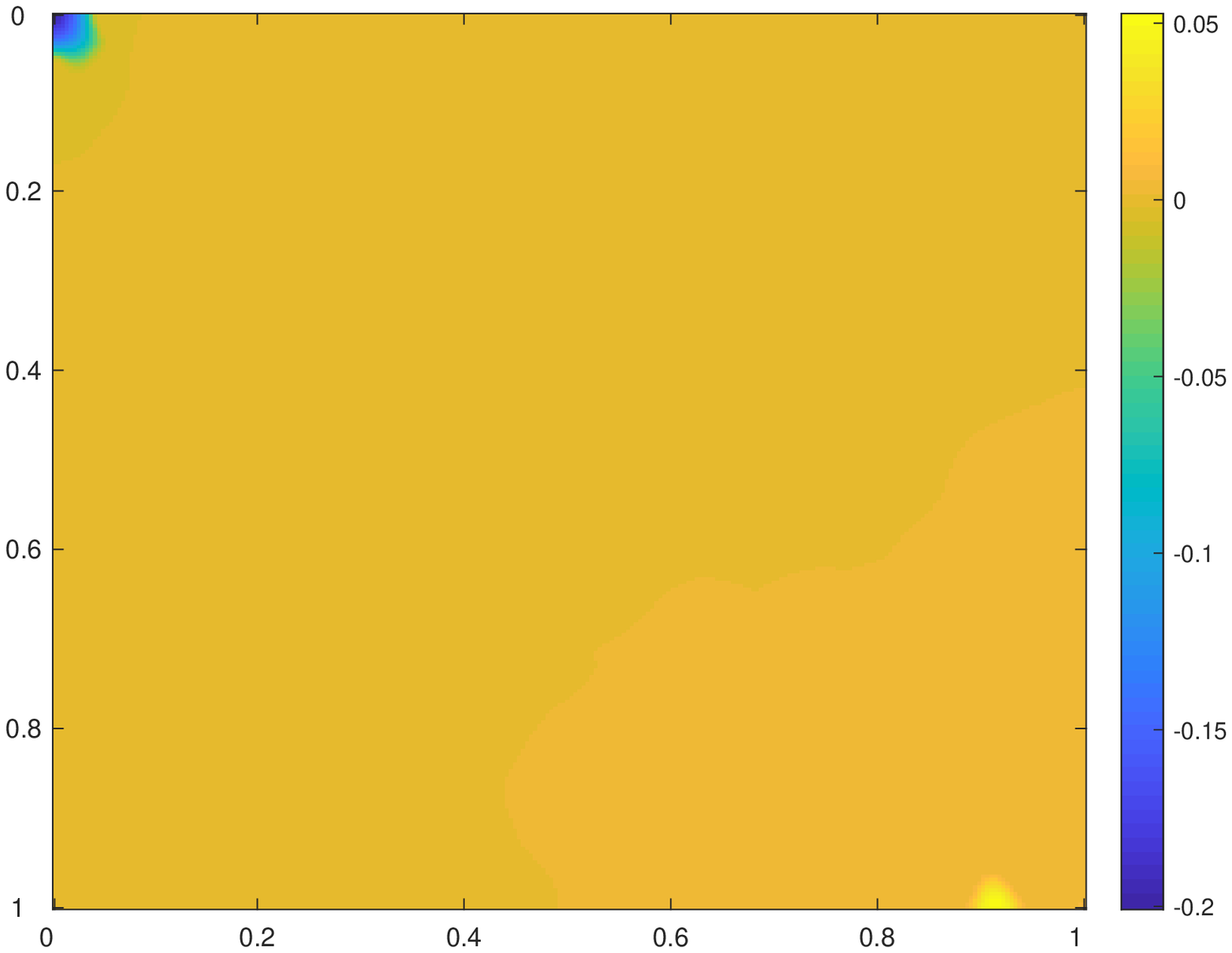}
	\label{fig:exp3_ref_p}}
	\subfigure[Velocity.]{
	\includegraphics[width=2.6in]{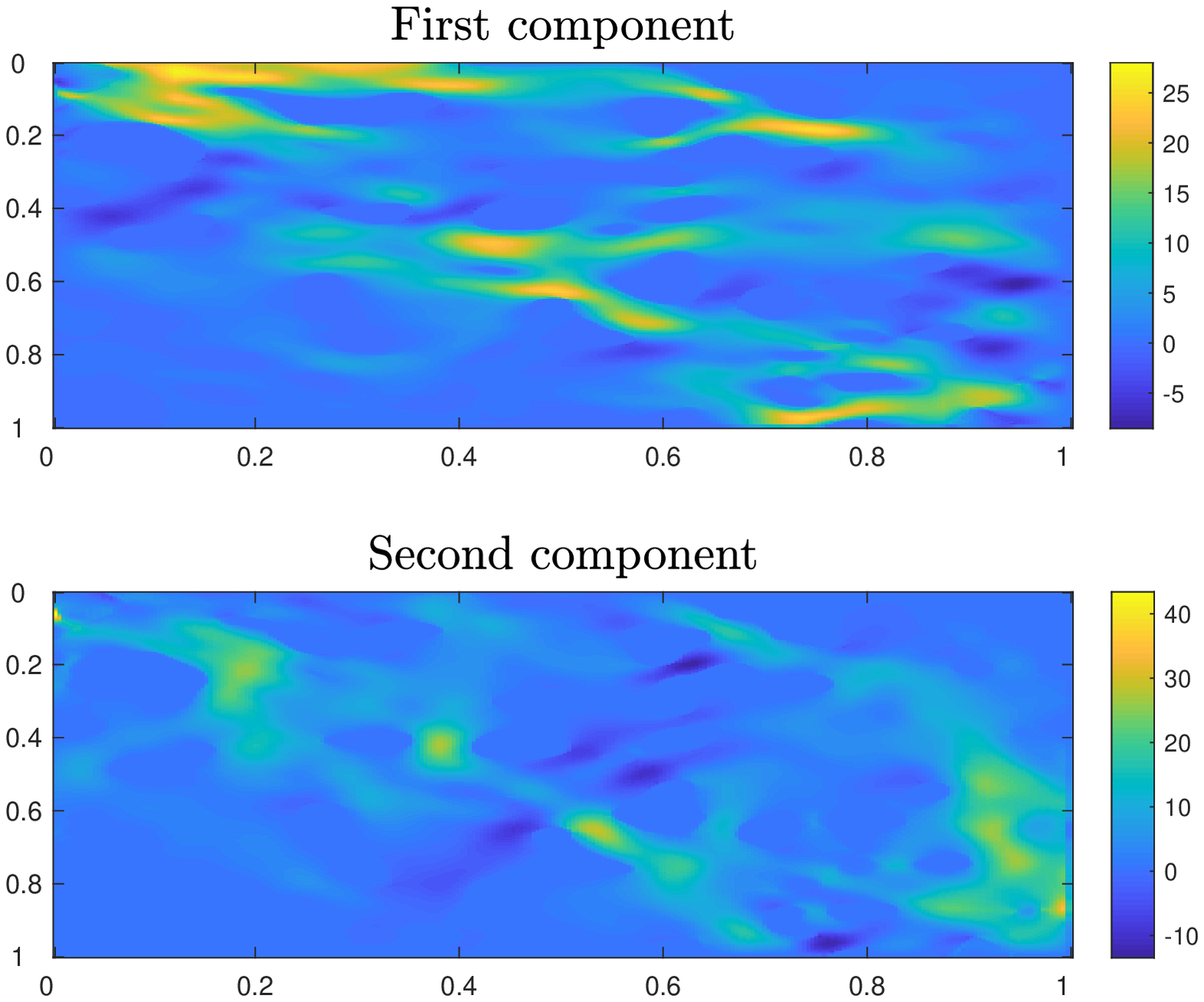}
	\label{fig:exp3_ref_v}}
	}
	\caption{Reference solution in Example \ref{exp:3}.}
	\label{fig:exp3_ref_sol}
	\end{figure}
	
	\begin{figure}[h!]
	\centering
	\mbox{
	\subfigure[Pressure.]{
	\centering
	\includegraphics[width=2.6in]{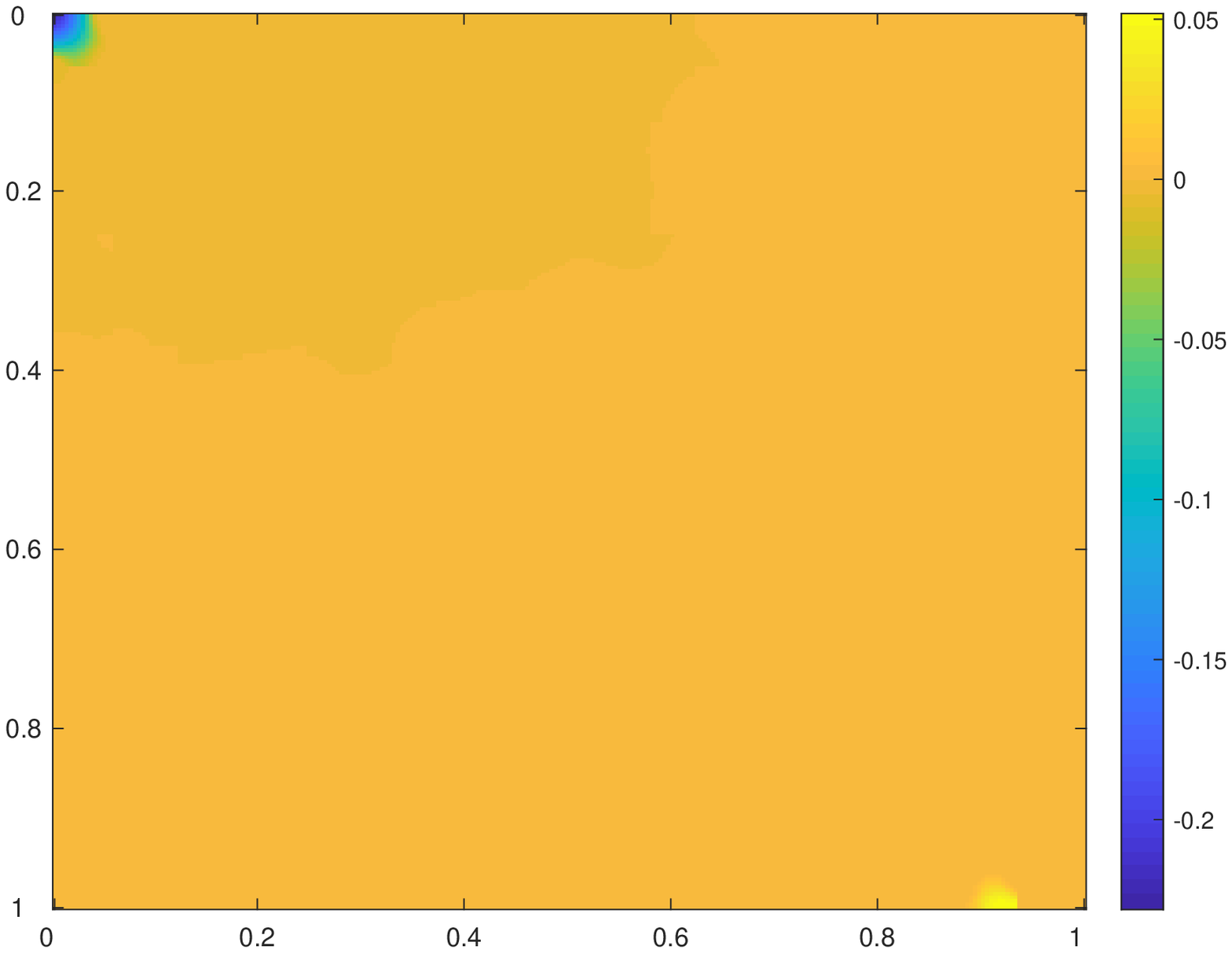}
	\label{fig:exp3_ms_p}}
	\subfigure[Velocity.]{
	\includegraphics[width=2.6in]{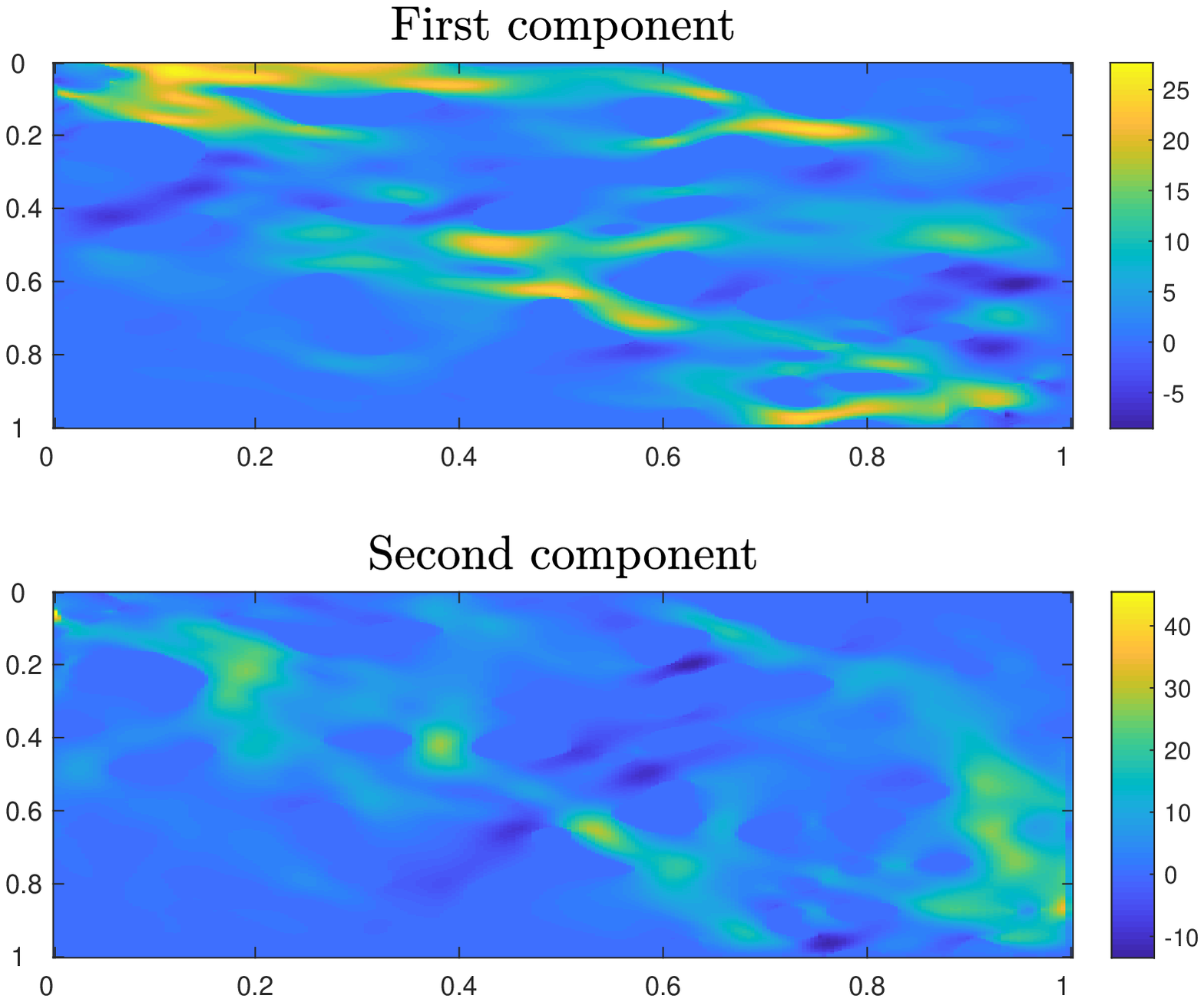}
	\label{fig:exp3_ms_v}}
	}
	\caption{Multiscale solution in Example \ref{exp:3} ($\theta = 1$; $m= 3$; $\tilde \ell = 2$).}
	\label{fig:exp3_ms_sol}
	\end{figure}

	\begin{table}[htbp!]
	\centering
	\begin{tabular}{ccccc}
	\hline 
		\revii{$J$} &  \revii{$m$} &  DOF & \revii{$e_u$}  & \reviii{CPU times (sec)}\\
	\hline 
		$3$ & $0$ & $768$ & \revii{$20.80919\%$} & $-$  \\ 
		$3$ & $1$ & $993$ & \revii{$0.42899\%$}    & \reviii{84.1748}\\ 
		$3$ & $2$ & $1218$ & \revii{$0.03002\%$}  & \reviii{84.0263}\\ 
		$3$ & $3$ & $1443$ & \revii{$0.00194\%$}  & \reviii{92.4949}\\ 
	\hline
	\end{tabular}
	\caption{Uniform enrichment with $\theta= 1$, \revii{$\ell = 2$, and $\tilde \ell = 2$} (Example \ref{exp:3}).}
	\label{tab:exp_3_theta_1}
	\end{table}
\end{example}

\section{Conclusion}
\label{sec:conclusion}
In this research, we propose an online adaptive strategy for CEM-GMsFEM in mixed formulation. The CEM-GMsFEM developed in \cite{chung2018constraint} provides a systematic approach to construct (offline) multiscale basis functions that give a mesh-dependent convergence rate, regardless of the heterogeneities of the media. In some applications, one may need to further improve the accuracy of the approximation without additional mesh refinement. In these cases, one needs to enrich the multiscale space by adding more basis functions in the online stage. The online basis functions for mixed CEM-GMsFEM are constructed by using the oversampling technique and the information of local residuals. Moreover, an adaptive enrichment algorithm is presented to reduce error in some selected regions with large residuals. The analysis of the method shows that the convergence rate depends on the constant of exponential decay and a user-defined parameter. Numerical experiments are provided to validate the analytical estimate.
\subsection*{Acknowledgement}
Eric Chung's work is partially supported by the Hong Kong RGC General Research Fund (project 14304217) and CUHK Direct Grant for Research 2017-18.
\bibliographystyle{plain}
\bibliography{references}

\end{document}